
\documentclass[11pt]{article}
	\usepackage{ifthen}
	\newboolean{isTRAC}\setboolean{isTRAC}{false}


	\usepackage{_aux/trac2016} 
\usepackage[utf8]{inputenc}           
\usepackage[T1]{fontenc}
\usepackage[ngerman,english]{babel}
\usepackage{textcomp}
%
\usepackage{xcolor}
%
\usepackage[markup=nocolor]{changes}
	\definechangesauthor{AC}
\usepackage{soul} 
%
\usepackage{lineno}
%
\usepackage{empheq}
\usepackage{enumerate}
\usepackage{caption}
\usepackage{subcaption}	
%
\usepackage{graphicx}
\usepackage{rotating}
\usepackage{pdfpages}
%
\usepackage{amssymb}
\usepackage{amsfonts}
\usepackage{amsmath}
\usepackage{mathtools}
\usepackage{thmtools}
\usepackage{units}
\usepackage{siunitx}
\usepackage{xfrac}
\usepackage{cancel}
%
\usepackage{longtable}
\usepackage{multirow}
\usepackage{makecell}
\usepackage{booktabs}
\usepackage{colortbl}
%
\usepackage{csquotes}
\usepackage[backend=biber,style=numeric,citestyle=ieee,sorting=none,sortcites=true]{biblatex}
%
\usepackage{listings}
\usepackage{./_aux/mcode}
%
\usepackage{algorithm, algpseudocode} 

%
\usepackage{hyperref}
\usepackage[noabbrev,capitalise]{cleveref} 
%
\usepackage{tikz}
\usetikzlibrary{decorations.pathreplacing}
\usetikzlibrary{shapes,arrows,patterns}
\usepackage{pgfplots}
\pgfplotsset{compat=newest} 
\pgfplotsset{plot coordinates/math parser=false}
\pgfplotsset{yticklabel style={text width=2.5em,align=right}}
\pgfplotsset{/pgf/number format/.cd, fixed,precision=6}
%
\newlength\myheight 
\newlength\mywidth  
%
\usepackage{pgf-pie}
\definecolor{tumblue}{HTML}{0065BD}
\definecolor{tumblue1}{HTML}{98C6EA}
\definecolor{tumblue2}{HTML}{64A0C8}
\definecolor{tumblue3}{HTML}{0073CF}
\definecolor{tumblue4}{HTML}{005293}
\definecolor{tumblue5}{HTML}{003359}
\definecolor{tumgreen}{HTML}{A2AD00}
\definecolor{tumorange}{HTML}{E37222}
\definecolor{tumivory}{HTML}{DAD7CB}
\definecolor{tumviolet}{HTML}{69085A}
\definecolor{tumred}{HTML}{C4071B}
%
\definecolor{bg-gray}{HTML}{E5E5E5}
\definecolor{bg-red}{HTML}{FFDDDD}
\definecolor{bg-green}{HTML}{DDFFDD}
\definecolor{bg-blue}{HTML}{DDDDFF}
%
\definecolor{text-gray}{HTML}{404040}
\definecolor{text-red}{HTML}{AA0000}
\definecolor{text-green}{HTML}{00AA00}
\definecolor{text-blue}{HTML}{0000AA}
%
%
\sisetup{locale=US}                        
\sisetup{detect-all=true,
         per-mode=symbol,
         binary-units=true,
         retain-zero-exponent=true,
         exponent-product=\cdot}
%
\hypersetup{pdftitle = {A New Framework for H2-Optimal Model Reduction},
  			pdfsubject = {},
  			pdfauthor = {Alessandro Castagnotto, Boris Lohmann},
  			pdfkeywords = {MOR, Model Order Reduction, Krylov Subspace Methods, H2 Pseudo-Optimality, Rational Interpolation},
  			pdfcreator = {},
  			pdfproducer = {},
            pdffitwindow = false,
            linktocpage = true,		
            colorlinks = true,		
            linkcolor = tumblue,	
            citecolor = tumblue,	
            urlcolor = tumblue,		
            }
%
\DeclareUrlCommand\doi{\def\UrlLeft##1\UrlRight{DOI:\nobreakspace\href{http://dx.doi.org/##1}{##1}}\urlstyle{rm}}
%
\crefformat{equation}{{#2{\color{black}(}#1{\color{black})}#3}}
\Crefformat{equation}{{#2{\color{black}(}#1{\color{black})}#3}}
\crefformat{enumeratei}{{#2{\color{black}(}#1{\color{black})}#3}}
\Crefformat{enumeratei}{{#2{\color{black}(}#1{\color{black})}#3}}
\crefname{enumeratei}{}{}
%
%

%
%
\newcommand{\ts}{\!} 
\newcommand{\teq}{\ts=\ts}
\newcommand{\tin}{\ts\in\ts}
\newcommand{\tapprox}{\ts\approx\ts}

%

%

%

%
\renewcommand{\phi}{\varphi}           
%
%

%
\newcommand{\fo}{N} 							
\newcommand{\ro}{n} 							
\newcommand{\is}{m} 							
\newcommand{\os}{p} 							

\newcommand{\iu}{{i\mkern1mu}}


\newcommand{\defeq}{\vcentcolon=}


\newcommand{\Hinf}{\mathcal{H}_\infty}                                       
\newcommand{\Htwo}{\texorpdfstring{\mathcal{H}_2}{H2}}                       
\newcommand{\HtwoText}{\texorpdfstring{$\mathcal{H}_2$}{H2}}                 
\newcommand{\norm}[2]{\lVert #1 \rVert_{#2}}                                 
\newcommand{\normHtwo}[1]{\lVert #1 \rVert_{\Htwo}}                          
%
\newcommand{\Reals}{\mathbb{R}}
\newcommand{\Complex}{\mathbb{C}}
%

\renewcommand{\Im}{\mathrm{Im}}
%
\DeclareMathOperator{\Image}{\mathcal{R}}

\DeclareMathOperator{\trace}{tr}                     
\DeclareMathOperator{\diag}{diag}                    
%
%
\newcommand{\trans}[1]{#1^\top}
\newcommand{\htrans}[1]{#1^H}

\newcommand{\inv}[1]{#1^{-1}}
\newcommand{\invt}[1]{#1^{-\top}}

%
\newcommand{\ei}{e_i}


\newcommand{\kIrka}{k_{\mathcal{H}_2}}
\newcommand{\kCirka}{k_{\model}}

\newcommand{\shiftSetInput}{\left\{\sigma_i\right\}_{i=1}^\ro}

\newcommand{\tanDirSetInput}{\left\{r_i\right\}_{i=1}^\ro}

\newcommand{\tanDirSetOutputI}{\left\{l_i\right\}_{i=1}^\ro}

\newcommand{\shiftSetInputK}[1]{\left\{\sigma_i^{#1}\right\}_{i=1}^\ro}
\newcommand{\tanDirSetInputK}[1]{\left\{r_i^{#1}\right\}_{i=1}^\ro}
\newcommand{\tanDirSetOutputIK}[1]{\left\{l_i^{#1}\right\}_{i=1}^\ro}

\newcommand{\Asig}{A_{\sigma_i}}
\newcommand{\Cost}[2]{\mathcal{C}_{#2}\left(#1\right)}
\newcommand{\nLU}{n_{LU}}

\newcommand{\inRes}{\hat{b}_i}
\newcommand{\outRes}{\hat{c}_i}

\newcommand{\prim}{P}
\newcommand{\Vprim}{V^\prim}
\newcommand{\Wprim}{W^\prim}
\newcommand{\Sprim}[1]{S^{\prim {#1}}}

\newcommand{\Rprim}{R^\prim}
\newcommand{\Lprim}{L^\prim}

\newcommand{\model}{\mu}
	\newcommand{\SigmaM}[1]{\Sigma_\model^{#1}}
	\newcommand{\GM}{G_\model}
	\newcommand{\GMp}{G'_\model}
	\newcommand{\SigmaMr}[1]{\Sigma_{\model,r}^{#1}}
	\newcommand{\GMr}{G_{\model,r}}
	\newcommand{\GMrp}{G'_{\model,r}}
\newcommand{\nM}[1]{\ro_\model^{#1}}
\newcommand{\VM}[1]{V_\model^{#1}}
\newcommand{\WM}[1]{W_\model^{#1}}
\newcommand{\WMtrans}[1]{\trans{\left(W_\model^{#1}\right)}}
\newcommand{\VMr}[1]{V_{\model,r}^{#1}}
\newcommand{\WMr}[1]{W_{\model,r}^{#1}}
\newcommand{\WMrtrans}[1]{\trans{\left(W_{\model,r}^{#1}\right)}}
\newcommand{\SModel}[1]{S_\model^{#1}}
\newcommand{\RModel}[1]{R_\model^{#1}}
\newcommand{\LModel}[1]{L_\model^{#1}}

\newcommand{\shiftMi}{\sigma_{\model,i}}
\newcommand{\tanDirInputMi}{r_{\model,i}}
\newcommand{\tanDirOutputMi}{l_{\model,i}}
\newcommand{\shiftMj}{\sigma_{\model,j}}
\newcommand{\tanDirInputMj}{r_{\model,j}}
\newcommand{\tanDirOutputMj}{l_{\model,j}}

\newcommand{\tripletMj}{\left(\shiftMj,\tanDirInputMj,\tanDirOutputMj\right)}

\newcommand{\opt}{{\ast}}
\newcommand{\SOpt}[1]{S_\opt^{#1}}
\newcommand{\ROpt}[1]{R_\opt^{#1}}
\newcommand{\LOpt}[1]{L_\opt^{#1}}
\newcommand{\shiftOpti}{\sigma_{\opt,i}}
\newcommand{\tanDirInputOpti}{r_{\opt,i}}
\newcommand{\tanDirOutputOpti}{l_{\opt,i}}

\newcommand{\shiftSetInputOpt}{\left\{\shiftOpti\right\}_{i=1}^\ro}
\newcommand{\tanDirSetInputOpt}{\left\{\tanDirInputOpti\right\}_{i=1}^\ro}
\newcommand{\tanDirSetOutputIOpt}{\left\{\tanDirOutputOpti\right\}_{i=1}^\ro}

\newcommand{\tripletOpti}{\left(\shiftOpti,\tanDirInputOpti,\tanDirOutputOpti\right)}

\newcommand{\error}{\epsilon_{\Htwo}}
\newcommand{\errorEst}{\tilde{\epsilon}_{\Htwo}}

	\addbibresource{_aux/morWiki.bib}
	\addbibresource{_aux/newFramework.bib}
	\graphicspath{{./_pics/}}


\begin{document}
	\thevolume{}
	\date{March 2017}
	
	\title{A New Framework for \HtwoText-Optimal Model Reduction\thanks{The work related to this contribution is supported by the German Research Foundation (DFG), Grant LO408/19-1.}}
	\newcommand{\runtitle}{A New Framework for \HtwoText-Optimal Model Reduction}
	\author{Alessandro Castagnotto\footnote{Chair of Automatic Control, Technical University of Munich, Boltzmannstr. 15, D-85748 Garching} \footnote{Corresponding author: \href{mailto:a.castagnotto@tum.de}{a.castagnotto@tum.de}}\and Boris Lohmann\footnotemark[2]}

	\maketitle

\begin{abstract}
	In this contribution, a new framework for \HtwoText-optimal reduction of multiple-input, multiple-output linear dynamical systems by tangential interpolation is presented. The framework is motivated by the local nature of both tangential interpolation and \HtwoText-optimal approximations. 
	The main advantage is given by a decoupling of the cost of optimization from the cost of reduction, resulting in a significant speedup in \HtwoText-optimal reduction. 
	In addition, a middle-sized surrogate model is produced at no additional cost and can be used e.g. for error estimation.
	Numerical examples illustrate the new framework, showing its effectiveness in producing \HtwoText-optimal reduced models at a far lower cost than conventional algorithms.
	The paper ends with a brief discussion on how the idea behind the framework can be extended to approximate further system classes, thus showing that this truly is a general framework for interpolatory \HtwoText\ reduction rather than just an additional reduction algorithm.
\end{abstract}

%
\section{Introduction}\label{sec:introduction}
	Recent advances in the development of complex technical systems are partly driven by the advent of software tools that allow their computerized modeling and analysis, drastically reducing the resources required during development.
	Using information about the system's geometry, material properties and boundary conditions, a dynamical model can be derived in an almost automated way, allowing for design optimization and verifications using a \emph{virtual prototype}. 
	Depending on the complexity of the model at hand and the required accuracy during investigation, such models quite easily reach a high complexity. 
	As a consequence, simulations and design optimizations based on these models may require excessive computational resources or even become unfeasible.
	In addition, in some applications the models are required to be evaluated in real-time during operations, e.g. in \emph{embedded controllers} or \emph{digital twins} for state observation and predictive maintenance. In this latter case, the computational resources available are particularly limited.
	
	In this context, \emph{model reduction} is an active field of research aimed at finding numerically efficient algorithms to construct low-order, high-fidelity approximations to the high-order models in a numerically efficient way. 
	Amongst all, the most prominent and numerically tractable methods for linear systems include \emph{approximate balanced truncation} \cite{morPen00,morLi00,Saak_2009,benner2014self,morBenKS12,kurschner2016phd,morDruS11,morDruSZ14} and \emph{Krylov subspace methods} or \emph{rational interpolation} \cite{morGri97,morGalVV04,morAntBG10}.
	As these methods generally require only the solution of large sparse systems of equations, they can be applied efficiently even to models of very high order.
	
	In a context where the admissible model order is low, e.g. in real-time applications, it is of particular interest to find the best possible approximation for a given order. This problem has been addressed in terms of different error norms (see \cite{morGlo84,morBenQQ04} for optimal Hankel norm, \cite{helmersson1994model,Kavranoglu_1993,varga2001fast,morFlaBG13,morCasBG17} for optimal $\Hinf$-norm approximations), but only for the case of optimal $\Htwo$-norm approximations there exist algorithms \cite{morGugAB08,morVanGA08,morBeaG09b,morPanJWetal13} that are both numerically tractable and satisfy optimality conditions.
	\HtwoText-optimal reduction methods are based on the repeated reduction of the high-order model until a locally optimal reduced order model satisfying optimality conditions is found. Therefore, if the convergence is slow and the number of iterations is high, the numerical efficiency of the methods is diminished.

	In this contribution, we describe a new reduction framework to increase the numerical efficiency of \HtwoText-optimal reduction methods.
	In this new framework, firstly introduced in \cite{morPan14} and further developed in \cite{morCasPL17} for SISO models, \HtwoText\ optimization of the reduction parameters is performed in a \emph{reduced subspace}, lowering the optimization cost with respect to conventional methods. Through the update of the reduced subspace, optimality of the resulting reduced order model can be established.
	In this paper we generalize the framework, proving its validity for linear time-invariant system with multiple-inputs and multiple-outputs (MIMO) and indicating possible extensions to further system classes. 
	By applying this framework for \HtwoText-optimal reduction, substantial speedup can be achieved. 
	In addition, this framework bears the advantage of producing a \emph{model function}, i.e. a middle-sized surrogate model, at no additional cost. We will give first indications on how to exploit this model function in further applications and outline current research endeavors towards this direction.
	
	The remainder of the paper is structured as follows:
	\cref{sec:preliminaries} briefly revises \HtwoText-optimal reduction for MIMO linear systems, whereas in \cref{sec:costOfH2} we analyze the computational cost tied to these methods. 
	\cref{sec:aNewFramework} presents the main result of this contribution, i.e. a new framework for \HtwoText-optimal reduction.
	\cref{sec:numericalresults} will compare conventional \HtwoText-optimal reduction to the new framework in numerical examples and show the potential for significant speedup in reduction time. 
	In \cref{sec:furtherApplications} we indicate how to apply this framework to \HtwoText-optimal approaches for different system classes, motivating its general nature. Finally, \cref{sec:conclusions} will summarize and conclude the discussion.


\section{Preliminaries}\label{sec:preliminaries}

	\subsection{Model Reduction by Tangential Interpolation} \label{sec:tangentialInterpolation}
		Linear dynamical systems are generally described by state-space models of the form
		\begin{equation}
			\left.
			\begin{aligned}
				E \,\dot{x}(t) &= A\, x(t) + B\, u(t) \\
				y(t) &= C\, x(t) + D\, u(t)
			\end{aligned}
			\quad \right\} \Sigma
			\label{eq:FOM}
		\end{equation}
		where $E\ts\in\ts\Reals^{\fo \ts\times\ts \fo}$ is the regular \emph{descriptor matrix}, $A\ts\in\ts\Reals^{\fo\ts\times\ts \fo}$ is the system matrix and $x\ts\in\ts\Reals^\fo$, $u\ts\in\ts\Reals^\is$, $y\ts\in\ts\Reals^\os$ ($\os,\is\ts\ll\ts \fo$) represent the state, input and output of the system respectively.
		$\Sigma$ denotes the system \eqref{eq:FOM} by its state-space representation.
		The input-output behavior of a linear system \cref{eq:FOM} can be characterized in the frequency domain by $y(s)=G(s)u(s)$, with the rational transfer function matrix
		\begin{equation}\label{eq:G(s)}
			G(s) \defeq C \inv{\left(s E -A\right)} B + D \quad\in \Complex^{\os\times\is},
		\end{equation}
		obtained through Laplace transform of \eqref{eq:FOM} under the assumption $x(t=0)=0$. 
		The construction of a reduced order model (ROM) from the full order model (FOM) \cref{eq:FOM} can be obtained by means of a \emph{Petrov-Galerkin} projection
		\begin{equation}
		\left.
		\begin{aligned}
		\overbrace{\trans{W}E\,V}^{E_r} \, \dot{x}_r(t)\, &= \, \overbrace{\trans{W}A\,V}^{A_r}\, x_r(t) \, +  \, \overbrace{\trans{W} B}^{B_r} \, u(t)\\ 
		y_r(t) \, &= \; \underbrace{C\,V}_{C_r}\, x_r(t) \,+ \; D_r \, u(t)
		\end{aligned}
		\quad \right\} \Sigma_r
		\label{eq:ROM}	
		\end{equation}
		where $x_r\ts\in\ts\Reals^{\ro}$ $(\ro\ts\ll\ts \fo)$ represents the reduced state vector. 
		We will refer to the ROM realization in \eqref{eq:ROM} through $\Sigma_r$ and use the shorthand notation $\Sigma_r = \trans{W} \Sigma V$ to specify the projection matrices used.

		The primary goal of model reduction in the following will be the approximation of the output $y(t)\approx y_r(t)$ for all admissible inputs $u(t)$. This is equivalent to approximating the transfer function $G(s)\approx G_r(s)$.
		To achieve this goal, the appropriate design of projection matrices $V,W$ becomes the primary task of model reduction.
		Note that most commonly, the reduced feed-through matrix $D_r$ is chosen such that $D_r\ts=\ts D$, hence not playing a role in the reduction process. 
		In fact, this is a necessary condition for optimality with respect to the \HtwoText-norm \cite{morGugSW13}. Thus $D$ will be disregarded it in the following. 
		Note hover that some reduction approaches rely on this additional degree of freedom to increase the approximation quality (cp. \cite{morGlo84,morFlaBG13, morCasBG17}).
	
		The design of $V,W$ in the following will be driven by \emph{bitangential Hermite interpolation}, i.e. we are interested in constructing a ROM $\Sigma_r$ whose transfer function $G_r(s)$ satisfies
		\begin{equation}
			\begin{aligned}
			G(\sigma_i) \, r_i &= G_r(\sigma_i) \, r_i\;, 					\qquad &i = 1,\,\dots\,,\,\ro\\
			\trans{l_i}\, G(\sigma_i) &= \trans{l_i}\, G_r(\sigma_j)\;,  			\qquad &i = 1,\,\dots\,,\,\ro\\
			\trans{l_i}\, G'(\sigma_i) r_i &= \trans{l_i}\, G'_r(\sigma_i)\, r_i\;,  \qquad &i = 1,\,\dots\,,\,\ro
			\end{aligned}	
			\label{eq:tangential_interpolation_conditions}
		\end{equation}
		for complex frequencies $\sigma_i \in\Complex $ and input resp. output \emph{tangential directions} $r_i\in\Complex^{\is}$, $l_i\in\Complex^{\os}$.
		The following result indicates how to construct projection matrices $V,W$ to achieve \cref{eq:tangential_interpolation_conditions}.	
		\begin{theorem}[Bitangential Hermite Interpolation \cite{morGalVV04,morBeaG14}]
			\label{thm:tangential_interpolation}
			Consider a full-order model $\Sigma$ as in \eqref{eq:FOM} with transfer function $G(s)$ and let scalar frequencies $\sigma_i\in\Complex$ and vectors $r_i\in \Complex^{\is}$, $l_j\in \Complex^{\os}$ be given such that $\sigma_iE-A$ is nonsingular for $i=1,\,\dots\,,\,\ro$.
			Consider a reduced-order model $\Sigma_r$ as in \eqref{eq:ROM} with transfer function $G_r(s)$, obtained through projection $\Sigma_r=\trans{W}\Sigma V$.
			\begin{enumerate}
				\item If
				\begin{equation}
				(A-\sigma_iE)^{-1}B\,r_i \in\Image(V), \qquad i=1,\,\dots\,,\,\ro \label{eq:input_Krylov}
				\end{equation}
				then $G(\sigma_i) \, r_i = G_r(\sigma_i)\, r_i$.	
				\item If
				\begin{equation}
				\invt{(A-\sigma_iE)}\trans{C}\, l_i \in\Image(W), \qquad i=1,\,\dots\,,\,\ro \label{eq:output_Krylov}
				\end{equation}
				then $\trans{l_i}\cdot G(\sigma_i) = \trans{l_i}\cdot G_r(\sigma_i)$.
				\item If both \cref{eq:input_Krylov} and \cref{eq:output_Krylov} hold, then, in addition,
				\begin{equation}
				\trans{l_i}\, G'(\sigma_i) \, r_i= \trans{l_i}\, G_r'(\sigma_i)\, r_i,
				\qquad i = 1,\,\dots\,,\,\ro,
				\end{equation}
				where $G'(s)$ denotes the first derivative with respect to $s$.
			\end{enumerate}
		\end{theorem}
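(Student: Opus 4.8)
\emph{Proof idea.} The plan is to establish the three claims in turn, in each case by exhibiting an explicit vector that lies in the relevant projection subspace and then evaluating it through both the full-order and the reduced-order resolvent. Throughout I take for granted --- as is implicit in even writing down $G_r(\sigma_i)$ --- that $\sigma_iE_r-A_r$ is nonsingular; establishing this is the one genuinely delicate point, and I come back to it at the end.

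For the first claim, observe that since $\Image(V)$ is a subspace, the hypothesis $\inv{(A-\sigma_iE)}B\,r_i\in\Image(V)$ is equivalent to $v_i\defeq\inv{(\sigma_iE-A)}B\,r_i\in\Image(V)$, so we may write $v_i=Vx_i$ with $x_i\in\Complex^\ro$. The defining identity $(\sigma_iE-A)v_i=B\,r_i$ then reads $(\sigma_iE-A)Vx_i=B\,r_i$, and projecting from the left with $\trans{W}$, using $E_r=\trans{W}EV$, $A_r=\trans{W}AV$, $B_r=\trans{W}B$, gives $(\sigma_iE_r-A_r)x_i=B_rr_i$. Hence $G(\sigma_i)r_i=Cv_i=C_rx_i$, while $G_r(\sigma_i)r_i=C_r\inv{(\sigma_iE_r-A_r)}B_rr_i=C_rx_i$, so the two coincide. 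The second claim follows by transposition: $\invt{(A-\sigma_iE)}\trans{C}l_i\in\Image(W)$ is equivalent to $w_i\defeq\invt{(\sigma_iE-A)}\trans{C}l_i\in\Image(W)$, say $w_i=Wy_i$; its defining relation $\trans{(\sigma_iE-A)}w_i=\trans{C}l_i$, transposed, reads $\trans{w_i}(\sigma_iE-A)=\trans{l_i}C$. Right-multiplying by $V$ yields $\trans{y_i}(\sigma_iE_r-A_r)=\trans{l_i}C_r$, and therefore $\trans{l_i}G(\sigma_i)=\trans{w_i}B=\trans{y_i}B_r$ coincides with $\trans{l_i}G_r(\sigma_i)=\trans{l_i}C_r\inv{(\sigma_iE_r-A_r)}B_r=\trans{y_i}B_r$.

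For the third claim I would differentiate, using $G'(s)=-C\inv{(sE-A)}E\inv{(sE-A)}B$ and the same formula for $G_r'$ with $(E,A,B,C)$ replaced by $(E_r,A_r,B_r,C_r)$. From the first two parts we already have $\inv{(\sigma_iE-A)}B\,r_i=Vx_i$ and $\trans{l_i}C\inv{(\sigma_iE-A)}=\trans{w_i}=\trans{y_i}\trans{W}$ on the full-order side, and $\inv{(\sigma_iE_r-A_r)}B_rr_i=x_i$, $\trans{l_i}C_r\inv{(\sigma_iE_r-A_r)}=\trans{y_i}$ on the reduced side. Substituting into the two derivative formulas collapses both to the same scalar, since $\trans{l_i}G'(\sigma_i)r_i=-\trans{w_i}EVx_i=-\trans{y_i}(\trans{W}EV)x_i=-\trans{y_i}E_rx_i=\trans{l_i}G_r'(\sigma_i)r_i$. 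As anticipated, the real obstacle is not any of this algebra but the standing assumption that $\sigma_iE_r-A_r$ be invertible: this is not automatic for arbitrary $V,W$, and the cleanest remedy is to carry it as an explicit hypothesis (it holds under mild full-rank/genericity conditions on $V,W$, or can be enforced by a small perturbation of the interpolation data), which is where I expect the bookkeeping to require the most care.
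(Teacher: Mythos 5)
Your argument is correct and is exactly the standard projection proof of this result; the paper itself gives no proof but cites the literature, where the same computation (write $\inv{(\sigma_iE-A)}Br_i=Vx_i$, project with $\trans{W}$, identify $x_i$ with the reduced resolvent applied to $B_rr_i$, and dually for $W$, then combine for the Hermite condition) is carried out. Your closing remark is also on point: the nonsingularity of $\sigma_iE_r-A_r$ is a genuine implicit hypothesis of the theorem as stated (it is needed even to write $G_r(\sigma_i)$), and it is standard to either assume it or note that it holds generically, so no further bookkeeping is required.
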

		

		In addition, note that it is possible to tangentially interpolate higher order derivatives $G^{(k)}(s)$ at frequencies $\sigma_i\in\Complex$ by spanning appropriate Krylov subspaces \cite{morGri97,morGalVV04}.
		In general, Krylov subspaces are defined through a matrix $M\in\Complex^{\fo\times\fo}$, a vector $v\in\Complex^\fo$ and a scalar dimension $\ro\in\mathbb{N}$ as follows:
		\begin{equation}
		\mathcal{K}_\ro(M,v) = \Image{\left(\begin{bmatrix} v& M\,v & M^2\,v & \dots & M^{\ro-1} \end{bmatrix}\right)}.
		\end{equation}
		
		\begin{theorem}[Tangential Moment Matching \cite{morGri97,morGalVV04}]
			\label{thm:moment_matching}
			Consider a full-order model $\Sigma$ as in \eqref{eq:FOM} with transfer function $G(s)$ and let a scalar frequency $\sigma\in\Complex$ and nonzero vectors $r\in \Complex^{\is}$, $l\in \Complex^{\os}$ be given, such that $\sigma E-A$ is nonsingular.
			Consider a reduced-order model $\Sigma_r$ as in \eqref{eq:ROM} with transfer function $G_r(s)$, obtained through the projection $\Sigma_r=\trans{W}\Sigma V$.
			\begin{enumerate}
				\item If
				\begin{equation}
				\mathcal{K}_\ro\left((A-\sigma E)^{-1}E, (A-\sigma E)^{-1} B\,r\right) \subseteq \Image(V), \label{eq:input_Krylov_higherOrder}
				\end{equation}
				then $G^{(i)}(\sigma) \, r = G_r^{(i)}(\sigma)\, r$ for $i=0,\dots,\ro-1$.
				\item If
				\begin{equation}
				\mathcal{K}_\ro\left(\invt{(A-\sigma E)}\trans{E}, \invt{(A-\sigma E)} \trans{C}\,l\right) \subseteq \Image(W), \label{eq:output_Krylov_higherOrder}
				\end{equation}
				then $\trans{l} G^{(i)}(\sigma) = \trans{l} G_r^{(i)}(\sigma)$ for $i=0,\dots,\ro-1$.
				\item If both \cref{eq:input_Krylov_higherOrder} and \cref{eq:output_Krylov_higherOrder} hold, then, in addition,
				\begin{equation}
				\trans{l}\, G^{(i)}(\sigma) \, r= \trans{l}\, G^{(i)}(\sigma) \, r,
				\qquad i = \ro,\,\dots\,,\,2\ro-1.
				\end{equation}
			\end{enumerate}
		\end{theorem}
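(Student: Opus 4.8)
The plan is to reduce all three claims to a single algebraic fact — that each Krylov basis vector on the full-order level is the $V$- (resp.\ $W$-) image of the corresponding basis vector on the reduced level — and then read off moment matching from the local Taylor expansion of the transfer function. First I would record that expansion: writing $sE-A=(\sigma E-A)\bigl(I+(s-\sigma)(\sigma E-A)^{-1}E\bigr)$ and summing the geometric series gives, for $|s-\sigma|$ small,
\[
G(s)=\sum_{k\ge 0}(s-\sigma)^k\,(-1)^k\,C\bigl[(\sigma E-A)^{-1}E\bigr]^k(\sigma E-A)^{-1}B ,
\]
so that, up to a common sign and the factor $k!$, $G^{(k)}(\sigma)$ is proportional to $C\mathcal{A}^k(A-\sigma E)^{-1}B$ with $\mathcal{A}\defeq(A-\sigma E)^{-1}E$; the identical formula holds for $\Sigma_r$ with $\mathcal{A}_r\defeq(A_r-\sigma E_r)^{-1}E_r$, $A_r=\trans{W}AV$, etc. Hence claim (i) is equivalent to $C\mathcal{A}^k(A-\sigma E)^{-1}Br=C_r\mathcal{A}_r^k(A_r-\sigma E_r)^{-1}B_r r$ for $k=0,\dots,\ro-1$, and (ii) to the transposed statement.

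The core is then a lemma proved by induction on $j$: under \eqref{eq:input_Krylov_higherOrder} (and assuming $A_r-\sigma E_r$ nonsingular, so that $G_r(\sigma)$ is even defined),
\[
\mathcal{A}^{j}(A-\sigma E)^{-1}Br=V\,\mathcal{A}_r^{j}(A_r-\sigma E_r)^{-1}B_r r,\qquad j=0,\dots,\ro-1 .
\]
For $j=0$: since $(A-\sigma E)^{-1}Br\in\Image(V)$ and $V$ has full column rank, write it as $Vz$; premultiplying $(A-\sigma E)Vz=Br$ by $\trans{W}$ gives $(A_r-\sigma E_r)z=B_rr$, i.e.\ $z=(A_r-\sigma E_r)^{-1}B_rr$. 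For the inductive step, apply $\mathcal{A}=(A-\sigma E)^{-1}E$ to the previous identity; the left-hand side lies in $\Image(V)$ because $j\le\ro-1$, hence equals some $Vw$, and premultiplying $(A-\sigma E)Vw=E\,V\mathcal{A}_r^{j-1}(A_r-\sigma E_r)^{-1}B_rr$ by $\trans{W}$ yields $w=\mathcal{A}_r\mathcal{A}_r^{j-1}(A_r-\sigma E_r)^{-1}B_rr$. Multiplying the lemma by $C$ and using $CV=C_r$ proves (i); transposing the whole argument — replacing $A,E,C,V,r$ by $\trans{A},\trans{E},\trans{C},W,l$ — proves (ii) with the dual operator $\tilde{\mathcal{A}}\defeq\invt{(A-\sigma E)}\trans{E}$ and its compression $\tilde{\mathcal{A}}_r\defeq\invt{(A_r-\sigma E_r)}\trans{E_r}$.

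For the bitangential claim (iii) I would split the $(a{+}b{+}1)$-st moment. With $P=(A-\sigma E)^{-1}$ one has the telescoping identity $(PE)^aP\cdot E\cdot(PE)^bP=(PE)^{a+b+1}P$, so
\[
\trans{l}\,C\bigl[(A-\sigma E)^{-1}E\bigr]^{a+b+1}(A-\sigma E)^{-1}B\,r=\bigl(\trans{l}\,C\,\mathcal{A}^{a}P\bigr)\,E\,\bigl(\mathcal{A}^{b}P\,B\,r\bigr).
\]
The right factor is the $b$-th Krylov vector of \eqref{eq:input_Krylov_higherOrder}, so for $b\le\ro-1$ it equals $V\mathcal{A}_r^{b}(A_r-\sigma E_r)^{-1}B_r r$; transposing the left factor gives $\tilde{\mathcal{A}}^{a}\invt{(A-\sigma E)}\trans{C}l$, the $a$-th Krylov vector of \eqref{eq:output_Krylov_higherOrder}, so for $a\le\ro-1$ it equals $W\tilde{\mathcal{A}}_r^{a}\invt{(A_r-\sigma E_r)}\trans{C_r}l$. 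Substituting both and using $\trans{W}EV=E_r$ collapses the product back, via the same telescoping identity at the reduced level, to $\trans{l}\,C_r(P_rE_r)^{a+b+1}P_rB_r r\propto\trans{l}\,G_r^{(a+b+1)}(\sigma)r$. Since every $k\in\{\ro,\dots,2\ro-1\}$ is of the form $a+b+1$ with $a,b\le\ro-1$, this gives $\trans{l}\,G^{(k)}(\sigma)r=\trans{l}\,G_r^{(k)}(\sigma)r$ on that range (the remaining cases $k<\ro$ following already from (i) or (ii)).

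I expect the only real friction to be bookkeeping, not conceptual: getting the transpositions in (ii) and in the left factor of (iii) to line up so that $\tilde{\mathcal{A}}_r$ is genuinely the compression of $\tilde{\mathcal{A}}$, and disposing of the invertibility of $A_r-\sigma E_r$. I would handle the latter simply by making it a standing hypothesis — it is already implicit in \cref{thm:tangential_interpolation}, since otherwise $G_r(\sigma)$ and its derivatives are undefined — and note that full column rank of $V$ (resp.\ $W$) is likewise assumed, as is standard for Petrov–Galerkin projections.
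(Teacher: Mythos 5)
The paper does not prove \cref{thm:moment_matching}; it is imported verbatim from the cited references, so there is no in-paper proof to compare against. Your argument is correct and is exactly the standard one from that literature: the Neumann-series expansion of $(sE-A)^{-1}$ about $\sigma$ reduces moment matching to the identity $\mathcal{A}^{j}(A-\sigma E)^{-1}Br=V\,\mathcal{A}_r^{j}(A_r-\sigma E_r)^{-1}B_r r$, proved by induction via the Petrov--Galerkin relation $\trans{W}(A-\sigma E)V=A_r-\sigma E_r$; duality gives (ii), and the splitting $(PE)^{a+b+1}P=(PE)^aP\cdot E\cdot(PE)^bP$ together with $\trans{W}EV=E_r$ gives the extra $\ro$ bitangential moments in (iii). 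Your handling of the standing hypotheses (invertibility of $A_r-\sigma E_r$ and full column rank of $V$, $W$) is appropriate, and the range argument $k=a+b+1$ with $0\le a,b\le \ro-1$ covering $\{\ro,\dots,2\ro-1\}$ is complete.
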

		
		
		For bitangential Hermite interpolation as of \cref{eq:tangential_interpolation_conditions}, any bases $V$ and $W$ satisfying \cref{eq:input_Krylov} and \cref{eq:output_Krylov}, respectively, can be selected. 
		For theoretical considerations, \emph{primitive} bases are of particular interest, as defined in the following.
		
		\begin{definition}[Primitive bases]\label{def:primitive basis}
			Consider a full-order model $\Sigma$ as in \cref{eq:FOM}. Let interpolation frequencies $\sigma_i\tin\Complex$ and tangential directions $r_i\tin\Complex^{\is}$ and $l_i\tin\Complex^{\os}$ be given such that $A-\sigma_iE$ is invertible for all $i\teq1,\dots,\ro$.
			Then the \emph{primitive} projection bases $V^\prim$, $W^\prim$ are defined as
					\begin{subequations}
						\begin{align}
							V^\prim &= \left[(A-\sigma_1E)^{-1}Br_1\,,\, \dots\,,\,(A-\sigma_\ro E)^{-1}Br_\ro  \right] \label{eq:Vprim}\\
							W^\prim &= \left[\invt{(A-\sigma_1E)}\trans{C}l_1\,,\, \dots\,,\,\invt{(A-\sigma_\ro E)}\trans{C}l_\ro  \right] \label{eq:Wprim}
						\end{align}
						\label{eq:PrimBases}
					\end{subequations}
		\end{definition}
		On the other hand, from a numerical standpoint, $V$ and $W$ should be preferably orthogonal (or bi-orthogonal), real bases, improving the conditioning and resulting in a $\Sigma_R$ with real matrices. 
		Provided the frequencies $\shiftSetInput$ and respective tangential directions $\tanDirSetInput$, $\tanDirSetOutputI$ are closed under conjugation,
		this is always possible through bases changes $V \teq V^\prim T_V$ and $W \teq W^\prim T_W$, with regular $T_V,T_W\tin\Complex^{\ro\times\ro}$.
		
		Finally, note that the primitive bases \cref{eq:PrimBases} can be defined as solutions of particular \emph{generalized dense-sparse Sylvester equations}.
		\begin{lemma}[\cite{morGalVV04a,morGalVV04}] \label{thm:Sylvester equivalence}
			Consider a full-order model $\Sigma$ as in \cref{eq:FOM}. 
			Let interpolation frequencies $\sigma_i\tin\Complex$ and tangential directions $r_i\tin\Complex^{\is}$ be given such that $A-\sigma_iE$ is invertible for all $i\teq1,\dots,\ro$.
			Define matrices $\Sprim{}\defeq \diag\left(\sigma_1,\,\dots\,,\,\sigma_\ro\right)$, $\Rprim\defeq\left[r_1,\,\dots\,,\,r_\ro\right]$.
			Then the primitve basis $\Vprim$ satisfying \cref{eq:Vprim} solves the generalized sparse-dense Sylvester equation
			\begin{equation}
			A \Vprim - E \Vprim \Sprim{} - B\Rprim = 0 \label{eq:Sylvester:V}.
			\end{equation}
		\end{lemma}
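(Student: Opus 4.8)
The plan is to verify \cref{eq:Sylvester:V} by a direct column-by-column computation. Since $\Sprim{} = \diag(\sigma_1,\dots,\sigma_\ro)$ is diagonal, right-multiplication of $\Vprim$ by $\Sprim{}$ merely scales columns: the $i$-th column of $\Vprim\Sprim{}$ is $\sigma_i$ times the $i$-th column of $\Vprim$, that is $\sigma_i (A-\sigma_i E)^{-1} B r_i$. Similarly, by the definition of $\Rprim$, its $i$-th column is $r_i$, so the $i$-th column of $B\Rprim$ is $B r_i$.

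First I would fix an index $i\in\{1,\dots,\ro\}$ and write out the $i$-th column of $A\Vprim - E\Vprim\Sprim{} - B\Rprim$, which by the above equals
\[
 A (A-\sigma_i E)^{-1} B r_i \;-\; \sigma_i E (A-\sigma_i E)^{-1} B r_i \;-\; B r_i .
\]
Then I would combine the first two terms by factoring $A-\sigma_i E$ to the left: they sum to $(A-\sigma_i E)(A-\sigma_i E)^{-1} B r_i = B r_i$, where the hypothesis that $A-\sigma_i E$ is invertible (for all $i=1,\dots,\ro$) is used — this is precisely the assumption that also makes $\Vprim$ well-defined in \cref{eq:Vprim}. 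Hence the $i$-th column reduces to $B r_i - B r_i = 0$, and since $i$ was arbitrary, the whole matrix on the left-hand side of \cref{eq:Sylvester:V} vanishes.

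I do not expect any genuine obstacle: the argument is a one-line cancellation repeated in each column, and the only nontrivial ingredient is the invertibility assumption, which is already part of the statement. If a more compact matrix-level phrasing is preferred, one can equivalently observe that $A\Vprim - E\Vprim\Sprim{}$ has $i$-th column $(A-\sigma_i E)\cdot(A-\sigma_i E)^{-1}Br_i$ for each $i$, hence equals $B\Rprim$; but the column-wise version is the most transparent and is the one I would present.
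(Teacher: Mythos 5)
Your proof is correct: the column-wise cancellation $(A-\sigma_iE)(A-\sigma_iE)^{-1}Br_i = Br_i$ is exactly the standard verification, and the paper itself states this lemma without proof, deferring to the cited literature where the same direct argument is used. Nothing is missing.
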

		
		This relationship is particularly useful for theoretical considerations and will be exploited in \cref{sec:aNewFramework} in the proofs. A dual result for $\Wprim$ holds as well.
		\begin{lemma}\label{thm:Sylvester equivalence:W}
			Consider a full-order model $\Sigma$ as in \cref{eq:FOM}. 
			Let interpolation frequencies $\sigma_i\tin\Complex$ and tangential directions $l_i\tin\Complex^{\os}$ be given such that $A-\sigma_iE$ is invertible for all $i=1,\dots,\ro$.
			Define matrices $\Sprim{}\defeq \diag\left(\sigma_1,\,\dots\,,\,\sigma_\ro\right)$, $\Lprim\defeq\left[l_1,\,\dots\,,\,l_\ro\right]$.
			Then the primitve basis $W^\prim$ satisfying \cref{eq:Wprim} solves the generalized sparse-dense Sylvester equation
			\begin{equation}
			\trans{A} \Wprim - \trans{E} \Wprim \Sprim{}- \trans{C}\Lprim = 0 \label{eq:Sylvester:W}.
			\end{equation}
		\end{lemma}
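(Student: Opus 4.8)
The plan is to treat \cref{thm:Sylvester equivalence:W} as the exact dual of \cref{thm:Sylvester equivalence}, obtained by transposition. To that end I would introduce the transposed realization $\Sigma^{\top}$ with descriptor matrix $\trans{E}$, system matrix $\trans{A}$, input matrix $\trans{C}$ and output matrix $\trans{B}$, whose transfer function is $\trans{G(s)}$. The point is that the output-Krylov condition \cref{eq:Wprim} defining $\Wprim$ for $\Sigma$ is literally the input-Krylov condition \cref{eq:Vprim} for $\Sigma^{\top}$, with the output tangential directions $l_i$ playing the role of the $r_i$. Hence \cref{thm:Sylvester equivalence}, applied to $\Sigma^{\top}$ with $\Sprim{}\defeq\diag(\sigma_1,\dots,\sigma_\ro)$ and $\Lprim\defeq[l_1,\dots,l_\ro]$, immediately yields $\trans{A}\,\Wprim - \trans{E}\,\Wprim\,\Sprim{} - \trans{C}\,\Lprim = 0$, which is \cref{eq:Sylvester:W}.

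If one prefers a self-contained argument that does not invoke the earlier lemma on a transposed system, the same conclusion follows by a short column-wise computation, and I would include it for completeness. By \cref{eq:Wprim}, the $i$-th column $w_i$ of $\Wprim$ is $w_i = \invt{(A-\sigma_i E)}\trans{C}l_i$, i.e.\ it satisfies $(\trans{A}-\sigma_i\trans{E})\,w_i = \trans{C}\,l_i$, equivalently $\trans{A}\,w_i - \sigma_i\,\trans{E}\,w_i - \trans{C}\,l_i = 0$ for $i=1,\dots,\ro$. Since $\Sprim{}$ is diagonal, the $i$-th column of $\Wprim\Sprim{}$ is $\sigma_i w_i$, so the $i$-th column of $\trans{E}\,\Wprim\,\Sprim{}$ is $\sigma_i\,\trans{E}\,w_i$; likewise the $i$-th column of $\trans{C}\,\Lprim$ is $\trans{C}\,l_i$. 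Collecting the $\ro$ scalar-column identities into a single matrix equation gives \cref{eq:Sylvester:W}.

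I do not expect a genuine obstacle here: the statement is essentially a transcription of the defining relations of the primitive output basis. The only thing that needs care is the bookkeeping of sides — the shift matrix $\Sprim{}$ must multiply $\Wprim$ from the right, because it encodes a per-column scalar, whereas $\trans{A}$ and $\trans{E}$ act from the left — and the correct identification of transposes when passing to the dual system ($\invt{(A-\sigma_i E)} = (\trans{A}-\sigma_i\trans{E})^{-1}$). Both versions of the argument are a few lines; I would present the duality remark as the conceptual statement and keep the column-wise verification as the actual proof.
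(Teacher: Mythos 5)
Your proof is correct, and it takes exactly the route the paper intends: the paper states this lemma without proof, presenting it simply as the dual of \cref{thm:Sylvester equivalence}, which is precisely your first observation, and your column-wise verification that $(\trans{A}-\sigma_i\trans{E})w_i = \trans{C}l_i$ stacks into \cref{eq:Sylvester:W} is the standard one-line justification behind that duality. No gaps; both the bookkeeping of the right-multiplication by $\Sprim{}$ and the identity $\invt{(A-\sigma_i E)} = \inv{(\trans{A}-\sigma_i\trans{E})}$ are handled correctly.
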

	\subsection{\HtwoText-Optimal Reduction}
		For the design of $V$ and $W$ as of \cref{sec:tangentialInterpolation}, an appropriate choice for the interpolation frequencies $\shiftSetInput$ and tangential directions $\tanDirSetInput$ and $\tanDirSetOutputI$ needs to be made. 
		This is a non-trivial task, as the inspection of the FOM is a computationally challenging task in the large-scale setting. For this reason an automatic selection of reduction parameters minimizing the approximation error $\norm{G-G_r}{}$ for some chosen norm is highly desirable.
		
		In this contribution, we address the problem of finding an optimal ROM of prescribed order $\ro$ that minimizes the error measured in the \HtwoText\ norm, i.e.
		\begin{equation}
			G_r = \arg\min_{\deg\widehat{G}_r=\ro}\norm{G-\widehat{G}_r}{\Htwo},
			\label{eq:H2 optimization problem}
		\end{equation}	
		where the \HtwoText\ norm is defined as \cite{morAnt05}
		\begin{equation}
			\norm{G}{\Htwo} \defeq \left(\frac{1}{2\pi}\int\limits_{-\infty}^{\infty}\trace\left( \htrans{G}(-j\omega)G(j\omega)\right)\mathrm{d}\omega\right)^{1/2}.
		\end{equation}
		Note that there exists a direct relation between the approximation error in the frequency domain in terms of the \HtwoText\ norm and a bound for the $L_\infty$ norm of the output error in the time domain, according to \cite{morAntBG10}
		\begin{equation}
			\norm{y-y_r}{L_\infty} \leq \norm{G-G_r}{\Htwo}\,\norm{u}{L_2}.
		\end{equation}

		The optimization problem \eqref{eq:H2 optimization problem} is non-convex, therefore in general only local optima can be found. 
		Necessary conditions for local \HtwoText-optimality in terms of bitangential Hermite interpolation are available. 
		\begin{theorem}[\cite{Meier_Luenberger_1967,morGugAB08,morVanGA08}]\label{thm:H2optimality}
			Consider a full-order model \cref{eq:FOM} with transfer function $G(s)$. 
			Consider a reduced-order model with transfer function $G_r(s)\teq\sum\limits_{i=1}^{\ro} \frac{\outRes\inRes}{s-\lambda_{r,i}}$ with reduced poles $\lambda_{r,i}\tin\Complex$ and input resp. output residual directions $\trans{\inRes}\tin\Complex^{\is}$, $\outRes\tin\Complex^{\os}$. 
			
			If $G_r(s)$ satisfies \cref{eq:H2 optimization problem} locally, then
			\begin{subequations}
				\begin{align}
					G(-\bar{\lambda}_{r,i})\trans{\inRes} &= G_r(-\bar{\lambda}_{r,i})\trans{\inRes} \label{eq:OC1}\\
					\trans{\outRes}G(-\bar{\lambda}_{r,i}) &= \trans{\outRes}G_r(-\bar{\lambda}_{r,i}) \label{eq:OC2}\\
					\trans{\outRes}G'(-\bar{\lambda}_{r,i})\trans{\inRes} &= \trans{\outRes}G'_r(-\bar{\lambda}_{r,i})\trans{\inRes} \label{eq:OC3}
				\end{align}
				\label{eq:OC}
			\end{subequations}
			for $i=1,\,\dots\,,\ro$.
		\end{theorem}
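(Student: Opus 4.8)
The plan is to read off \cref{eq:OC} as the first-order stationarity conditions of the squared error, after expressing that error explicitly in terms of the pole--residue data of $G_r$. Since $\normHtwo{G-G_r}<\infty$ already forces $G_r$ to be asymptotically stable, and since a local minimizer generically has semisimple poles (as the pole--residue form in the statement presupposes), I parametrize the competitor by the $\ro$ triples $(\lambda_{r,i},\inRes,\outRes)$ through $G_r(s)=\sum_{i=1}^{\ro}\outRes\inRes/(s-\lambda_{r,i})$. Writing
\begin{equation}
	\normHtwo{G-G_r}^2 = \normHtwo{G}^2 - 2\,\innerproductHtwo{G}{G_r} + \normHtwo{G_r}^2,
\end{equation}
only the last two terms depend on the parameters, so the local optimality in \cref{eq:H2 optimization problem} forces the gradient of $\mathcal{J}\defeq-2\,\innerproductHtwo{G}{G_r}+\normHtwo{G_r}^2$ with respect to each of the $3\ro$ parameter groups to vanish.

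The computational engine is the residue (pole--residue) representation of the $\Htwo$ inner product. For an asymptotically stable $G$ and a stable rational $\widehat G$ given in pole--residue form, evaluating the defining integral by closing the Bromwich contour in the open left half-plane and applying the residue theorem at the poles $\lambda_k$ of $\widehat G$ yields, after the reality bookkeeping dictated by $\overline{G(\bar s)}=G(s)$, a finite sum of residue contributions, each pairing the residue of $\widehat G$ at $\lambda_k$ with the value $G(-\bar\lambda_k)$; it is exactly this step that produces the mirrored arguments $-\bar\lambda_{r,i}$ appearing in \cref{eq:OC}. Applying it with $\widehat G=G_r$, and once more with $G_r$ in both slots, gives closed forms
\begin{equation}
	\innerproductHtwo{G}{G_r}=\sum_{i=1}^{\ro}\trans{\outRes}\,G(-\bar\lambda_{r,i})\,\trans{\inRes},
	\qquad
	\normHtwo{G_r}^2=\sum_{i=1}^{\ro}\trans{\outRes}\,G_r(-\bar\lambda_{r,i})\,\trans{\inRes},
\end{equation}
the latter equivalently the familiar double sum over pole pairs $(\lambda_{r,i},\lambda_{r,j})$.

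With these closed forms the three conditions fall out by elementary differentiation. Varying the input residual direction $\trans{\inRes}$, which enters $\innerproductHtwo{G}{G_r}$ linearly and $\normHtwo{G_r}^2$ through its diagonal $i$-term, the equation $\partial\mathcal{J}=0$ reads $\trans{\outRes}G(-\bar\lambda_{r,i})=\trans{\outRes}G_r(-\bar\lambda_{r,i})$, i.e. \cref{eq:OC2}; the mirror computation in the output residual direction $\outRes$ gives \cref{eq:OC1}. Varying the pole $\lambda_{r,i}$ differentiates the resolvent $1/(s-\lambda_{r,i})$ once more, so that $G$ and $G_r$ are replaced by their derivatives at $-\bar\lambda_{r,i}$; substituting the already-established \cref{eq:OC1}--\cref{eq:OC2} to annihilate the remaining first-order terms collapses $\partial_{\lambda_{r,i}}\mathcal{J}=0$ to $\trans{\outRes}G'(-\bar\lambda_{r,i})\trans{\inRes}=\trans{\outRes}G'_r(-\bar\lambda_{r,i})\trans{\inRes}$, i.e. \cref{eq:OC3}.

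The differentiation is routine; the main obstacle lies in three places. First, the reality/conjugation bookkeeping: the triples occur in complex-conjugate groups, and one must check that the real stationarity conditions are equivalent to the complex-looking relations \cref{eq:OC} at the mirror images $-\bar\lambda_{r,i}$ (so that, for a real $G_r$, they may be read at $-\lambda_{r,i}$). Second, the semisimplicity assumption: one should argue that a minimizer cannot have a defective pole, or handle that case by a Jordan-type refinement of the parametrization. Third, one must verify that $(\lambda_{r,i},\inRes,\outRes)\mapsto G_r$ is a genuine local chart of the manifold of stable degree-$\ro$ transfer functions, so that stationarity along these $3\ro$ directions is stationarity over all admissible order-$\ro$ competitors. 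A coordinate-free alternative that sidesteps this last point is to derive the Wilson/Lyapunov first-order conditions and then pass to interpolation via the Sylvester/Lyapunov dictionary of \cref{thm:Sylvester equivalence} and \cite{morGugAB08}; the residue route above is, however, the most direct.
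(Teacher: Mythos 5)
The paper does not prove this theorem; it is quoted from the cited literature (\cite{Meier_Luenberger_1967,morGugAB08,morVanGA08}), and your argument is exactly the standard derivation used there: expand $\normHtwo{G-G_r}^2$ via the pole--residue formula for the $\Htwo$ inner product and set the derivatives with respect to the residues $\inRes$, $\outRes$ and the poles $\lambda_{r,i}$ to zero. Your sketch is correct, and the three caveats you flag (conjugation bookkeeping, semisimplicity of the poles---which the theorem's hypothesis on the pole--residue form of $G_r$ already builds in---and the local-chart question for the parametrization) are precisely the points that must be settled to make it rigorous.
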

		The extension to the case of poles with higher multiplicities is omitted here for brevity and can be found in \cite{morVanGA10}.
	
		\cref{thm:tangential_interpolation} indicates how to construct bitangential Hermite interpolants for given interpolation data. However, it is not possible to know a-priori the eigenvalues and residual directions of the reduced order model. 
		For this reason, an iterative scheme known as Iterative Rational Krylov Algorithm (IRKA) has been developed \cite{morGug08,morVanGA08,morBeaG14} to iteratively adapt the interpolation data until the conditions \cref{eq:OC} are satisfied.
		A sketch is given in \cref{algo:IRKA}.
		
		 \begin{algorithm*}[!ht]\caption{MIMO \HtwoText-Optimal Tangential Interpolation (IRKA)} \label{algo:IRKA}
		 	\begin{algorithmic}[1]
		 		\Require FOM $\Sigma$; Initial interpolation data $S^\prim$, $R^\prim$, $L^\prim$ 
		 		\Ensure locally $\mathcal{H}_2$-optimal reduced model $\Sigma_r$
		 		\While{not converged} 
		 		\State{$\Vprim \gets A\Vprim-E\Vprim\Sprim{} - B\Rprim = 0$ \hfill{// compute projection matrix \cref{eq:Vprim}}} \label{algo:irka:V}
		 		\State{$\Wprim \gets \trans{A}\Wprim-\trans{E}\Wprim\Sprim{} - \trans{C}\Lprim = 0$ \hfill{// compute projection matrix \cref{eq:Wprim}}}\label{algo:irka:W}
		 		\State{$V \gets \text{qr}(\Vprim)$; $W \gets \text{qr}(\Wprim)$ \hfill{// compute orthonormal bases}}\label{algo:irka:qr}
		 		\State{$\Sigma_r \gets \trans{W}\Sigma V$ \hfill{// compute reduced model by projection}}
		 		\State{[$X,D,Y$] = eig($\Sigma_r$) \hfill{// eigendecomposition}}
		 		\State{$S^\prim \gets - \overline{D}; R^\prim \gets \trans{B_r}Y;  L^\prim \gets C_rX$ \hfill{// update interpolation data}}
		 		\EndWhile
		 	\end{algorithmic}
		 \end{algorithm*}
		 
		 Note that the primitive bases in lines \ref{algo:irka:V} and \ref{algo:irka:W} are not computed solving Sylvester equations (as in \cite{morXuZ11}) but rather the sparse linear systems in \eqref{eq:PrimBases}. 
		 The equivalent representation using Sylvester equations (cp. \cref{thm:Sylvester equivalence}) is used for brevity.
		 
		 Finally, note that \cref{algo:IRKA} is not the only \HtwoText-optimal method for linear systems present in literature, but is certainly best known due to its simplicity and effectiveness. 
		 Other approaches worth mentioning include the trust-region algorithms in \cite{morBeaG09b} and \cite{morPanJWetal13}.
		 In addition, \cite{morBeaG12} derives a \emph{residue correction} algorithm to optimize the tangential directions for fixed poles, speeding up convergence for models with many inputs and outputs.
		 Even though for brevity we will not treat all these algorithms individually, the proofs of \cref{sec:aNewFramework} will make evident that the new framework presented in this paper applies to these algorithms as well, as all methods are targeted at satisfying the optimality conditions \eqref{eq:OC}.


\section{The Cost of \HtwoText-Optimal Reduction}\label{sec:costOfH2}
	The computational cost of model reduction by IRKA (cp. \cref{algo:IRKA}) is dominated by the large-scale linear systems of equations (LSE) involved in computing $\Vprim$, $\Wprim$ according to
	\begin{subequations}%
		\begin{align}%
			\overbrace{\left(A -\sigma_i E\right)}^{=:\Asig} V_i^\prim &= B \,r_i, \quad i=1,\,\dots\,,\ro, \label{eq:LSE:V} \\%
			\trans{\Asig} W_i^\prim &= \trans{C} \,l_i \quad i=1,\,\dots\,,\ro. \label{eq:LSE:W}%
		\end{align}%
		\label{eq:LSE}%
	\end{subequations}%
	In fact, the orthogonalization process involved in transforming $V\teq \Vprim T_V$ and $W\teq\Wprim T_W$, as well as the matrix-matrix multiplications involved in the projection $\Sigma_r\teq\trans{W}\Sigma V$ and the low-dimensional eigenvalue decomposition are in general of subordinated importance\footnote{For dense matrices, this can be motivated by simple asymptotic operation counts. The QR decomposition of a $V\ts\in\ts\Reals^{\fo\times\ro}$ matrix via Householder requires $2\ro^2\left(\fo-\frac{\ro}{3}\right)$ flops and is hence linear in $\fo$. The flops involved in the product $\trans{W}EV$ are $\ro\fo(2\fo-1) + \ro^2(2\fo-1)$ for a dense $E$ and hence quadratic in $\fo$. Note however that for a diagonal $E$ matrix---an ideally sparse invertible matrix---the flops become at most $\ro\fo + \ro^2(2\fo-1)$, hence being linear in $\fo$ \cite{golub1996matrix}.}. 
	
	As the matrix $\Asig$ of large-scale systems is in general \emph{sparse} \cite{saad2003iterative,davis2006direct}, the actual cost involved in solving one LSE depends on a series of factors (including sparsity pattern, number of nonzero elements, conditioning, \dots) as well as the effective exploitation of available hardware resources. 
	It is therefore not possible (or even meaningful) to perform asymptotic operation counts as in the dense case.
	Nonetheless, to demonstrate that the reduction cost is indeed dominated by the solution of the sparse LSE in \cref{eq:LSE}, \cref{fig:algo complexity} compares the average execution times for the different computation steps of \cref{algo:IRKA} using MATLAB\textsuperscript{\textregistered} R2016b on an Intel\textsuperscript{\textregistered} Core\textsuperscript{\texttrademark} i7-2640 CPU @ \unit[2.80]{GHz} computer with \unit[8]{GB} RAM\footnote{Unless otherwise stated, this setup will be used for all numerical results.}. 
	
	The comparison includes the sparse \mcode{lu} decomposition of the matrix $A_{\sigma=1}$,  the economy-sized \mcode{qr} decomposition of the projection matrix $\Vprim$, the matrix products involved in $\trans{W} \Sigma V$ as well as the small dimensional generalized \mcode{eig} decompositions. 
	The reduced order is set to $\ro\teq10$ for all cases, while the original model order is given on the x-axis. 
	The times given are averaged amongst several executions and cumulated for each IRKA step: At each iteration of IRKA, \ro\ \mcode{lu} decompositions, two \mcode{qr} decompositions, one projection $\trans{W}\Sigma V$ and one \mcode{eig} decomposition are performed.
	The models used are taken from the benchmark collections \cite{morChaV02,morKorR05}.
	\setlength{\mywidth}{12cm}
	\setlength{\myheight}{3cm}
	\begin{figure}[h]
		\centering
%
\definecolor{mycolor1}{rgb}{0.76863,0.02745,0.10588}%
\definecolor{mycolor2}{rgb}{0.63529,0.67843,0.00000}%
\definecolor{mycolor3}{rgb}{0.00000,0.39608,0.74118}%
\definecolor{mycolor4}{rgb}{0.89020,0.44706,0.13333}%
\begin{tikzpicture}

\begin{axis}[%
width=0.816\mywidth,
height=\myheight,
at={(0\mywidth,0\myheight)},
scale only axis,
xmode=log,
xmin=30,
xmax=200000,
xminorticks=true,
xlabel style={font=\color{white!15!black}},
xlabel={$\fo$},
ymode=log,
ymin=1e-05,
ymax=1000,
yminorticks=true,
ylabel style={font=\color{white!15!black}},
ylabel={execution time /$s$},
axis background/.style={fill=white},
xmajorgrids,
xminorgrids,
ymajorgrids,
yminorgrids,
legend style={at={(1.03,0.5)}, anchor=west, legend cell align=left, align=left, draw=white!15!black}
]
\addplot [color=mycolor1, mark=o, mark options={solid, mycolor1}]
  table[row sep=crcr]{%
48	0.0059973345045816\\
120	0.00363129225210213\\
270	0.00962660424034135\\
348	0.0796647150893303\\
1006	0.00504406519747457\\
11730	1.23675594416044\\
20082	9.97163662324294\\
34722	8.35126888751036\\
66917	139.442065049559\\
79841	3.97809904594584\\
106437	45.7537055970876\\
};
\addlegendentry{\mcode{lu}}

\addplot [color=mycolor2, dashed, mark=triangle, mark options={solid, mycolor2}]
  table[row sep=crcr]{%
48	0.000160243667939041\\
120	0.000172113122055843\\
270	0.000599510068056166\\
348	0.000528848780679233\\
1006	0.00071202838767714\\
11730	0.00301606693019753\\
20082	0.00591295633024975\\
34722	0.00973269276920221\\
66917	0.0200722732630886\\
79841	0.0243353815084651\\
106437	0.0343398401004617\\
};
\addlegendentry{\mcode{qr}}

\addplot [color=mycolor3, dashed, mark=x, mark options={solid, mycolor3}]
  table[row sep=crcr]{%
48	0.000114797423857542\\
120	8.93831476089782e-05\\
270	0.000277142093847777\\
348	0.00070424622778062\\
1006	0.000402689041131039\\
11730	0.0035506391302576\\
20082	0.00445343910791936\\
34722	0.0192197373445568\\
66917	0.0208684081437979\\
79841	0.0215857795970363\\
106437	0.0562527541691759\\
};
\addlegendentry{$\trans{W}\Sigma V$}

\addplot [color=mycolor4, dashed, mark=triangle, mark options={solid, rotate=180, mycolor4}]
  table[row sep=crcr]{%
48	0.000188676625295642\\
120	4.74415922939109e-05\\
270	0.000748427644477851\\
348	0.000212859279652127\\
1006	0.000199866876163136\\
11730	0.000249666058402423\\
20082	0.00023686081314213\\
34722	0.000301373047101991\\
66917	0.000613691831901724\\
79841	0.000212367234631517\\
106437	0.000173827732925208\\
};
\addlegendentry{\mcode{eig}}

\end{axis}
\end{tikzpicture}%
		\caption{Cumulated execution times involved in each step of \cref{algo:IRKA} for different benchmark models ($\ro\teq10$).}
		\label{fig:algo complexity}
	\end{figure}
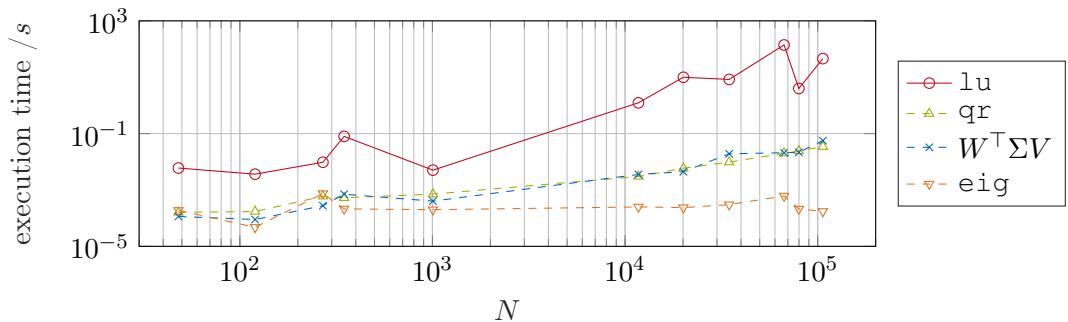 
	
	As it can be seen, the execution time for the sparse \mcode{lu} decompositions grows more and more dominant as the problem size increases. 
	This becomes even more evident in \cref{fig:pie}, where the execution times are given as percentage of the total time for one IRKA iteration. The two models shown represent the extreme cases of \cref{fig:algo complexity}, i.e. where the execution time for the \mcode{lu} decompositions has the smallest and largest share. 
	\begin{figure}[h]
		\begin{subfigure}[b]{.5\linewidth}
			\centering
			\begin{tikzpicture}
			\pie[radius =2, text=legend,explode ={0.2 , 0, 0, 0}]{79.3/\mcode{lu},11.2/\mcode{qr}, 6.3/$\trans{W}\Sigma V$,3.1/\mcode{eig}}
			\end{tikzpicture}
			\caption{FOM model ($\fo=1006$)}
			\label{fig:pie:1}
		\end{subfigure}%
		\begin{subfigure}[b]{.5\linewidth}
			\centering
			\begin{tikzpicture}
			\pie[radius =2, text=legend,explode ={0.2 , 0, 0, 0}]{100.0/\mcode{lu},0.0/\mcode{qr}, 0.0/$\trans{W}\Sigma V$,0.0/\mcode{eig}}
			\end{tikzpicture}
			\caption{Gas Sensor model ($\fo=66917$)}
			\label{fig:pie:2}
		\end{subfigure}		
		\caption{Cumulated execution times as total share of each step of \cref{algo:IRKA}.}
		\label{fig:pie}
	\end{figure}
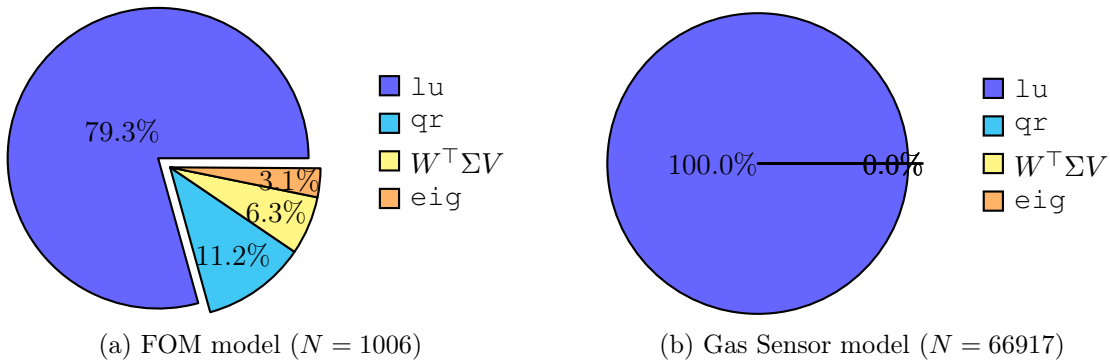

	IRKA (and alternative \HtwoText\ reduction methods) require the repeated reduction of an $\fo$\textsuperscript{th}-order model until, after $\kIrka$ steps, a set of \HtwoText-optimal parameters $\shiftSetInputOpt$, $\tanDirSetInputOpt$, and $\tanDirSetOutputIOpt$ is found.
	Following the results of \cref{fig:algo complexity} and \cref{fig:pie}, its cost can be approximated by 
	\begin{equation}
		\Cost{\text{IRKA}}{\fo} \approx \underbrace{\kIrka}_{optimization} \cdot \,\underbrace{2\ro\, \Cost{\text{LSE}}{\fo}}_{reduction},
		\label{eq:Cost of IRKA}
	\end{equation}	
	where the cost of a $\fo$-dimensional LSE $\Cost{\text{LSE}}{\fo}$ depends on the model at hand as well as the chosen LSE solver\footnote{Note that complex conjugated pairs of shifts $\sigma_i\teq\overline{\sigma_j}$ yield complex conjugated directions $\Vprim_i\teq\overline{\Vprim}_j$. Therefore, if direct solvers are used, then the factor $2\ro$ reduces to $\ro$ if the LU factorization of \cref{eq:LSE:V} is recycled in \cref{eq:LSE:W}. The factor $2\ro$ can thus be considered as a worst-case scenario.}.
	While the second factor in \cref{eq:Cost of IRKA} represents the \emph{cost of a single reduction}, the factor $\kIrka$ represents the cost introduced by the \emph{optimization}.
	From this representation, it becomes evident that the cost of optimization is tied to---in fact weighted with---the cost of a full reduction. 
	The efficiency of \HtwoText-optimal reduction methods can hence be significantly deteriorated by bad convergence, which can be a result of a bad initialization or the selection of an unsuitable reduced order.
	A very similar discussion applies to trust-region-based \HtwoText-optimal reduction methods \cite{morBeaG09b,morPanJWetal13}, where the evaluation of gradient and Hessian in each step also involves a full reduction.
	
	Clearly, a more desirable setting would be to obtain \HtwoText-optimal reduction parameters at a far lower cost than the cost of reduction, having to reduced the full-order model only once.
	In the following, we introduce a new framework that effectively \emph{decouples} the cost of reduction from the cost of finding \HtwoText-optimal reduction parameters without compromising optimality.

\section{A New Framework for \HtwoText-Optimal Reduction}\label{sec:aNewFramework}
	The framework discussed in this section was first introduced in \cite[p.83]{morPan14} for SISO models, under the name of \emph{model function}, a heuristic to reduce the cost of \HtwoText-optimal reduction within the SPARK algorithm. 
	This heuristic was later applied to IRKA and proven in \cite{morCasPL17} to yield \HtwoText-optimal reduced order models under certain \emph{update conditions}. 
	In this contribution, we give a more extensive discussion of the framework, extending its validity to MIMO models. In \cref{sec:furtherApplications} we will indicate how to apply this framework also to further system classes for which \HtwoText\ approximation algorithms are available.
	
	The main motivation for the new framework arises from simple considerations on the \emph{locality} of model reduction by tangential interpolation. In fact,
	\begin{enumerate}
		\item tangential interpolation only guarantees to yield a good approximation \emph{locally} around the frequencies $\shiftSetInput$, tangentially along directions $\tanDirSetInput$ and $\tanDirSetOutputI$,
		\item as $\normHtwo{G-G_r}$ is a non-convex function, in general only \emph{local} optima can be achieved.
	\end{enumerate}
	
	This can be exploited during reduction as follows:
	Suppose a full-order model $\Sigma$ and initial tangential interpolation data $\shiftSetInputK{0}$,$\tanDirSetInputK{0}$,  and $\tanDirSetOutputIK{0}$ are given. 
	Conventional \HtwoText\ reduction approaches would initialize e.g. IRKA and run for $\kIrka$ steps until convergence.
	In contrast, as our goal is to find a \emph{local} optimum close to the initialization, then optimization with respect to a surrogate model---a good \emph{local} approximation with respect to the initial interpolation data---may suffice.
	
	For this reason, the new framework starts by building an intermediate model $\SigmaM{}$---in the following denoted as \emph{Model Function}, in accordance to its first introduction in \cite{morPan14}---of order $\nM{}$, with $\fo\gg\nM{}>\ro$. 
	\begin{definition}\label{def:model function}
		Consider a full-order model $\Sigma$ as in \cref{eq:FOM}. 
		Let interpolation data $\shiftMi\tin\Complex$, $\tanDirInputMi\tin\Complex^{\is}$, $\tanDirOutputMi\tin\Complex^{\os}$, $i\teq1,\dots,\nM{}$, be given and define the matrices 
		$\SModel{}\teq\diag \left(\sigma_{\model,1}, \dots, \sigma_{\model,\nM{}}\right)$,
		$\RModel{}\teq\begin{bmatrix} r_{\model,1}&\dots&r_{\model,\nM{}}	\end{bmatrix}$, 
		$\LModel{}\teq\begin{bmatrix} l_{\model,1}&\dots&l_{\model,\nM{}}	\end{bmatrix}$. 
		Then the Model Function $\SigmaM{}$ is defined as
		\begin{equation}
			\SigmaM{} \defeq \WM{}\Sigma \VM{},
			\label{eq:model function}
		\end{equation}
		where $\VM{}, \WM{}$ solve the Sylvester equations
		\begin{subequations}
			\begin{align}
				A\VM{} - E\VM{} \SModel{} - B\RModel{} &= 0, \\
				\trans{A}\WM{}-\trans{E}\WM{}\SModel{} - \trans{C}\LModel{} &= 0.
			\end{align}
		\end{subequations}
	\end{definition}
	From \cref{def:model function} and the results of \cref{thm:Sylvester equivalence}, \cref{thm:Sylvester equivalence:W}, and \cref{thm:tangential_interpolation} follows that $\SigmaM{}$ is a bitangential Hermite interpolant of $\Sigma$ with interpolation frequencies and tangential directions specified in $\SModel{}$, $\RModel{}$ and $\LModel{}$. 
	
	\subsection{The Model Function Framework}
	
		\paragraph{Step 1: Initialization of $\SigmaM{}$}
		The goal of the Model Function $\SigmaM{}$ is to be a good approximation of $\Sigma$ locally with respect to the initial interpolation data $\shiftSetInputK{0}$,$\tanDirSetInputK{0}$,  and $\tanDirSetOutputIK{0}$ for it to be used as a \emph{surrogate} of $\Sigma$ during the \HtwoText\ optimization. 
		For obvious reasons it must hold $\nM{}\ts>\ts\ro$, making it not possible to choose $\SModel{}\teq S^0 := \diag\left(\sigma^0_1,\dots,\sigma^0_\ro\right)$, 
		$\RModel{}=R^0:=\begin{bmatrix} r^0_{1}&\dots&r_{\ro}^0	\end{bmatrix}$, and
		$\LModel{}=L^0:=\begin{bmatrix} l^0_{,1}&\dots&l^0_{\ro}\end{bmatrix}$. 
		As an appropriate choice for $\SModel{}$ is not unique, we provide two possible selections that appear to be meaningful to us:
		\begin{enumerate}
			\item [\bf I.1] \label{init:1}Include the initial interpolation data $S^0,R^0,L^0$ with additional interpolation of $G(s)$ and its first $\nM{}\ts-\ts\ro\ts-\ts1$ derivatives at the frequency $\sigma\teq0$ for a sum of all input and output channels. This can be achieved through
			\begin{equation}
				\SModel{} = \left[\begin{array}{c|c c c c}
					S^0 & 0 \\
					\hline
					0   & J_0
				\end{array}\right], \quad 
				\RModel{} = \left[\begin{matrix}
				R^0 & e_1 & 0 &\dots & 0
				\end{matrix}\right], \quad
				\LModel{} = \left[\begin{matrix}
				L^0 & e_1 & 0& \dots & 0
				\end{matrix}\right],
			\end{equation}
			where $J_0$ is a Jordan block of size $\nM{}\ts-\ts\ro$ with eigenvalue $0$ and $e_1 := \trans{\left[\begin{matrix} 1 & 1 & \dots & 1
			\end{matrix}\right]}$ of appropriate dimensions.
			In this case, the choice of $\nM{}$ is free. Due to \cref{thm:moment_matching}, we expect the approximation quality of $\SigmaM{}$ to improve locally around $\sigma\teq0$ as $\nM{}$ grows.
			\item [\bf I.2] \label{init:2} Tangentially interpolate higher-order derivatives with respect to the data $S^0,R^0,L^0$. This can be achieved through
			\begin{equation}
			\SModel{} = \left[\begin{array}{c c c c | c c c c}
			S^0 & J_0 \\
			0 			   & S^0
			\end{array}\right], \quad 
			\RModel{} = \left[\begin{matrix}
			R^0 & 0
			\end{matrix}\right], \quad
			\LModel{} = \left[\begin{matrix}
			L^0 & 0
			\end{matrix}\right],
			\end{equation}
			(cp. \cref{thm:moment_matching}). In this case $\nM{}\teq2\,\ro$ and we expect the approximation quality of $\SigmaM{}$ to increase locally around the frequencies $\shiftSetInputK{0}$ with respect to the tangential directions $\tanDirSetInputK{0}$ and $\tanDirSetOutputIK{0}$.
		\end{enumerate}
		Both approaches bear the advantage of minimizing the additional cost tied to the initialization of $\SigmaM{}$. In fact, note that for every value $\sigma_{\model,i}\teq\sigma_{\model,j}$, the LSEs in \cref{eq:LSE} share the same left hand side, allowing e.g. the recycling of LU decompositions or preconditioners.
		
		\paragraph{Step 2: \HtwoText\ optimization with respect to $\SigmaM{}$}
		As the Model Function $\SigmaM{}$ is a good approximation of $\Sigma$ locally with respect to the initial interpolation data, we can run the \HtwoText\ optimization with respect to $\SigmaM{}$ with initialization $S^0$, $R^0$, $L^0$. 
		The optimal interpolation data found at convergence is denoted by 
		$\SOpt{}\teq\diag\left(\sigma_{\opt,1},\dots,\sigma_{\opt,\ro}\right)$,
		$\ROpt{}\teq\begin{bmatrix} r_{\opt,1}&\dots&r_{\opt,\ro}	\end{bmatrix}$, and
		$\LOpt{}\teq\begin{bmatrix} l_{\opt_,1}&\dots&l_{\opt,\ro}\end{bmatrix}$. 
		As typically $\nM{}\ll\fo$, we are expecting the cost of this optimization to be significantly lower than a full optimization over $\Sigma$.
		As we assume the validity of the resulting reduced order model $\SigmaMr{}$ to be \emph{confined} locally to the frequency region defined by $\SModel{}$, we call the combination of IRKA with this Model Function framework \emph{Confined IRKA} (CIRKA). 
		
		In fact, by approximating $\Sigma$ through $\SigmaM{}$, we have lost any optimality condition between $\Sigma$ and $\SigmaMr{}$ and can only claim optimality conditions between $\SigmaM{}$ and $\SigmaMr{}$.
		Whether $\SigmaMr{}$ is an acceptable approximation of $\Sigma$ highly depends on the approximation quality of $\SigmaM{}$ itself. 
		This loss of connection between $\Sigma$ and $\SigmaMr{}$ is a typical drawback of so-called \emph{two-step} model reduction approaches (cp. e.g. \cite{morLehE07}).
		
		Nonetheless, by means of surrogate optimization, a new set of interpolation data $\SOpt{}$, $\ROpt{}$, $\LOpt{}$ has been found in a cost-effective way. 
		At this point, an update of $\SigmaM{}$ with local information about these frequencies is required.
		
		\paragraph{Step 3: Update of $\SigmaM{}$ and fix-point iteration}
			To restore the relationship between $\Sigma$ and $\SigmaMr{}$, the Model Function $\SigmaM{k}$ from the current step $k$ can be updated to a new $\SigmaM{k+1}$ by enforcing tangential interpolation with respect to the optimal data $\SOpt{k}$, $\ROpt{k}$, $\LOpt{k}$. 
			This can be achieved by updating the projection matrices 
			\begin{subequations}
				\begin{align}
					\VM{k+1} &= \begin{bmatrix} \VM{k} & \VM{k,+} \end{bmatrix}, \label{eq:Update:V}\\
					\WM{k+1} &= \begin{bmatrix} \WM{k} & \WM{k,+} \end{bmatrix}, \label{eq:Update:W}
				\end{align}
				\label{eq:Update}
			\end{subequations}
			with new directions $\VM{k,+}$ and $\WM{k,+}$.
			
			As for the initialization of $\SigmaM{0}$, several strategies are possible to update $\SigmaM{k}$. In the following, we present only a few most relevant ones:
			\begin{enumerate}
				\item [\bf U.1] \label{update:1} Update $\SigmaM{k}$ with tangential interpolation with respect to \emph{all} optimal frequencies and tangential directions $\SOpt{k}$, $\ROpt{k}$, $\LOpt{k}$. 
				This implies that $\nM{k}$ increases by $\ro$ in every step and that higher-order derivatives are tangentially interpolated in case specific combinations of optimal frequencies and tangential directions are repeated.
				In this approach, we expect the approximation quality of $\SigmaM{k+1}$ to increase around all optimal frequencies $\shiftSetInputOpt$, with respect to the tangential directions $\tanDirSetInputOpt$ and $\tanDirSetOutputIOpt$.
				\item [\bf U.2] \label{update:2} Update $\SigmaM{k}$ only with tangential interpolation with respect to \emph{new} interpolation data. This implies that the order of the Model Function increases by at most $\ro$ in every step.
				In this approach, we expect the approximation quality of $\SigmaM{k+1}$ to increase only around new optimal frequencies $\shiftOpti$ with respect to the respective tangential directions $\tanDirInputOpti$ and $\tanDirOutputOpti$, for $i\teq 1,\dots,\nM{k,+}$, where $\nM{k,+}$ denotes the number of new frequencies.
				\item [\bf U.3] \label{update:3} \emph{Reinitialize} $\SigmaM{k}$ by including ``only'' tangential interpolation about the new optimal data.
				In this approach, we expect the approximation quality of $\SigmaM{k+1}$ to increase in the regions around the new optimal frequencies $\shiftSetInputOpt$, with respect to the tangential directions $\tanDirSetInputOpt$ and $\tanDirSetOutputIOpt$, but potentially decrease around previous interpolation data.
				However, the oder $\nM{k}$ can be kept constant through iterations using this method, reducing the cost of \HtwoText\ optimization.
				As $\nM{k}>\ro$, similar considerations apply as for the Initialization in Step 1.
			\end{enumerate}
			
			The updated Model Function $\SigmaM{k+1}$ can be used again to perform a low-dimensional \HtwoText\ optimization and potentially improve the optimal frequencies and directions $\SOpt{k+1}$,$\ROpt{k+1}$ and $\LOpt{k+1}$, which in general may differ from those of the previous step.
			This leads to a fix-point iteration, until, after $\kCirka$ steps, an update of $\SigmaM{\kCirka-1}$ does not result in new optimal interpolation data.
			
			The overall procedure is summarized in \cref{algo:CIRKA} for the case where \HtwoText\ optimization is performed with IRKA. Note however that \cref{algo:CIRKA:IRKA} can be replaced by any \HtwoText-optimal reduction method, leading to the more general \emph{Model Function framework}.
			
			\begin{algorithm*}[!ht]\caption{Confined IRKA (CIRKA)} \label{algo:CIRKA}
				\begin{algorithmic}[1]
					\Require FOM $\Sigma$; Initial interpolation data $S^0$, $R^0$, $L^0$ 
					\Ensure  reduced model $\SigmaMr{}$, Model Function $\SigmaM{\kCirka}$, error estimation $\errorEst$
					\State{$k\gets0$; $\left[\SigmaM{k},\SOpt{\text{tot}},\ROpt{\text{tot}},\LOpt{\text{tot}}\right]\gets$ empty; \hfill{// Initialization}}
					\State{$\SOpt{k}\gets S^0$, $\ROpt{k}\gets R^0$, $\LOpt{k}\gets L^0$; }
					\While{not converged} 
						\State{$k\gets k+1$}
						\State{$\SigmaM{k}\gets$updateModelFunction$\left(\Sigma,\SigmaM{k-1},\SOpt{\text{tot}},\ROpt{\text{tot}},\LOpt{\text{tot}},\SOpt{k-1},\ROpt{k-1},\LOpt{k-1}\right)$}
						\State{$\left[\SigmaMr{},\SOpt{k},\ROpt{k},\LOpt{k} \right]\gets$ IRKA $(\SigmaM{k},\SOpt{k-1},\ROpt{k-1},\LOpt{k-1})$\hfill{// \HtwoText\ optimization}} \label{algo:CIRKA:IRKA}
						\State{$\SOpt{\text{tot}}\gets$blkdiag$\left(\SOpt{\text{tot}},\SOpt{k}\right)$; 
							   $\ROpt{\text{tot}}\gets \left[\ROpt{\text{tot}},\ROpt{k}\right]$;
							   $\LOpt{\text{tot}}\gets \left[\LOpt{\text{tot}},\LOpt{k}\right]$ }
					\EndWhile
					\State{$\kCirka\gets k$}
					\State{$\errorEst \gets$norm$\left(G_{\model}^{\kCirka}-G_{\model,r}\right)$}
				\end{algorithmic}
			\end{algorithm*}
			
			For brevity, details on the implementation are omitted at this point. MATLAB\textsuperscript{\textregistered} code for the proposed algorithms is provided with this paper as functions of the \mcode{sssMOR} toolbox \cite{morCasCJetal17}. 

	\subsection{An illustrative example}
	To make the explanation of the framework clearer, we include a simple numerical example to accompany the discussion. 
	As a test model we consider the benchmark model \mcode{beam} \cite{morChaV02} of order $\fo\ts=\ts348$ with $\is\teq\os\teq1$.
	Our aim is to find a model of reduced order $\ro\ts=\ts 4$ satisfying \HtwoText-optimality conditions. 
	We assume that no previous information about the relevant frequency region is given, hence the initialization is chosen with interpolation frequencies $\sigma_1^0\teq\dots\teq\sigma_\ro^0\teq0$, corresponding to $S^0\ts=\ts J_0$. As the model is SISO, there is no need to specify tangential directions. 
	\paragraph{Step 1: Initialization of $\SigmaM{}$}	
	As initial order for the Model Function we select $\nM{}\ts=\ts 2\ro\ts=\ts8$. In this particular case, both initializations I.1 and I.2 of $\SigmaM{}$ coincide. \cref{fig:cirka demo 1} shows the Bode plot of $\Sigma$ and the resulting $\SigmaM{}$\footnote{All numerical examples presented in this contribution were generated using the \mcode{sss} and \mcode{sssMOR} toolboxes in MATLAB\textsuperscript{\textregistered} \cite{morCasCJetal17}.}. 
	The orange circles indicate the imaginary part of the frequencies used to initialize the Model Function.
	\setlength{\mywidth}{8cm}
	\setlength{\myheight}{4cm}
	\begin{figure}[h]
		\centering
		\input{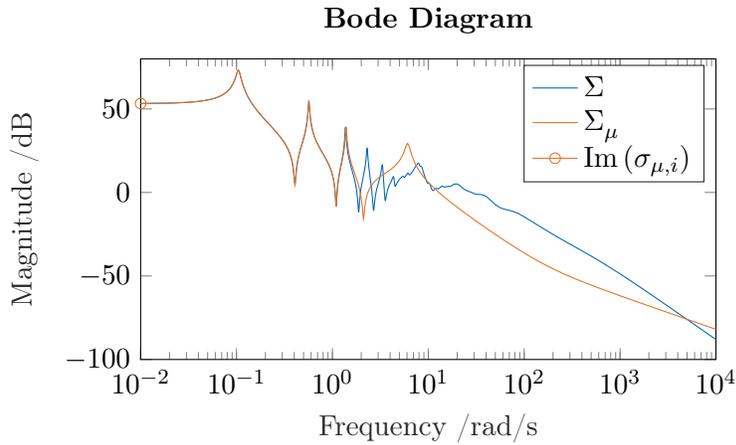}
		\caption{Illustrative Bode plot of $\Sigma$ and its Model Function $\SigmaM{}$.}
		\label{fig:cirka demo 1}
		\end{figure}
	
	\paragraph{Step 2: \HtwoText\ optimization with respect to $\SigmaM{}$}
	\cref{fig:cirka demo 2} shows the reduced order model $\SigmaMr{}$ resulting from the optimization with respect to $\SigmaM{}$.
	\begin{figure}[h]
		\centering
		\input{_pics/cirka_demo_2.tikz}
		\caption{Comparison of $\Sigma$, $\SigmaM{}$ and $\SigmaMr{}$ after \HtwoText\ optimization.}
		\label{fig:cirka demo 2}
	\end{figure}
	In this example, IRKA converged in $\kIrka\teq 4$ steps at optimal frequencies $\SOpt{}\teq\diag\left(\begin{matrix} 0.005 \pm \iu0.104 & 0.006 \pm \iu0.569\ \end{matrix}\right)$, whose imaginary parts are depicted in \cref{fig:cirka demo 2} by green diamonds. 
	In this case, by inspection of the Bode plot, we see that $\SigmaMr{}$ appears to be an acceptable approximation also of the original model $\Sigma$ for the given order $\ro$. This is a result of $\SigmaM{}$ being a valid approximation of $\Sigma$ also locally around the new frequency region represented by $\SOpt{}$.
	However, this need not hold true in general.  For this reason, an update of the Model Function is performed.
	
	\paragraph{Step 3: Update of $\SigmaM{}$ and fix-point iteration}
	Using $\SOpt{1}$, we update $\SigmaM{1}$ to $\SigmaM{2}$ by including interpolation of the full-order model $\Sigma$ with respect to the frequencies in $\SOpt{1}$. As all optimal frequencies differ from those used for initialization of $\SigmaM{}$, update strategies U.1 and U.2 coincide in this case.
	\cref{fig:cirka demo 3} shows the updated Model Function $\SigmaM{2}$ of order $\nM{2}\teq12$ for the \mcode{beam} example, as well as the reduced order model $\SigmaMr{2}$ and new optimal frequencies $\SOpt{2}$ resulting from IRKA on $\SigmaM{2}$. 
	\begin{figure}[h]
		\centering
		\input{_pics/cirka_demo_4.tikz}
		\caption{Comparison of $\Sigma$, $\SigmaM{2}$ and $\SigmaMr{2}$ after repeated \HtwoText\ optimization}
		\label{fig:cirka demo 3}
	\end{figure}
	In this example, IRKA converges (within some tolerance) to the same optimal frequencies $\SOpt{2}\ts\approx\ts\SOpt{1}$ and hence the whole framework required $\kCirka\teq2$ iterations until convergence. The fact that an update of the Model Function does not yield a new optimum indicates that the Model Function $\SigmaM{1}$ was already accurate enough in the region around $\SOpt{1}$.
	
	Finally, note that while this framework has required 3 full-sized LU decompositions to generate and update the Model Function, a direct application of IRKA to $\Sigma$ requires $1\ts+\ts(\kIrka\ts-\ts1)\cdot\ro/2\ts=\ts1\ts+\ts3\cdot 2\ts=\ts7$ full-sized LU decompositions, i.e. more than double the amount. A deeper discussion and comparison of the complexities will be given in \cref{sec:costOfNewFramework} and \cref{sec:numericalresults}.
	It is also worth noting that both CIRKA and IRKA converge to the same local optimum.
	
	\subsection{Optimality of the New Framework}
		So far, the new framework has been presented as a heuristic to perform \HtwoText\ surrogate optimization and hopefully reduce the overall reduction cost.
		The update of $\SigmaM{k}$ has been introduced to increase the accuracy of the surrogate model in the frequency regions the optimizer seemed to deem as important.
		However our original intent was to obtain a reduced model $\Sigma_r$ satisfying the \HtwoText-optimality conditions \cref{eq:OC}. The question still remains as to whether this goal has been achieved.
		
		In this section, we prove that the update of $\SigmaM{k}$ is sufficient to satisfy the optimality conditions \cref{eq:OC} at convergence. 
	
		\begin{theorem}\label{thm:SigmaMr_optimality}
			Consider a full-order model $\Sigma$ as in \eqref{eq:FOM} and let $G(s)$ denote its transfer function.
			Let $\VM{},\WM{}\tin\Complex^{\fo\times\nM{}}$ be projection matrices satisfying 
			Sylvester equations of the form 
			\begin{subequations}
			\begin{align}
				A \VM{} - E \VM{} \SModel{} - B\RModel{} &= 0, \\
				\trans{A} \WM{} - \trans{E} \WM{} \SModel{}- \trans{C}\LModel{} &= 0,
			\end{align}\label{eq:Proof:Sylv1}
			\end{subequations}
			with matrices $\SModel{}\teq\diag\left(\sigma_{\model,1},\dots,\sigma_{\model,\nM{}}\right)$, 
			$\RModel{}\teq\begin{bmatrix}r_{\model,1},\dots,r_{\model,\nM{}}\end{bmatrix}$,
			$\LModel{}\teq\begin{bmatrix}l_{\model,1},\dots,l_{\model,\nM{}}\end{bmatrix}$ and
			${\shiftMj\tin\Complex}$, $\tanDirInputMj\tin\Complex^{\is}$, $\tanDirOutputMj\tin\Complex^{\os}$.
			Consider the Model Function $\SigmaM{}$ resulting from the projection $\SigmaM{}\teq\WM{\top}\Sigma\VM{}$ and let $\GM(s)$ denote its transfer function.
			
			Let $\VMr{}, \WMr{}\tin\Complex^{\nM{}\times\ro}$ be projection matrices satisfying Sylvester equations of the form
			\begin{subequations}
			\begin{align}
			A_{\model} \VMr{} - E_\model \VMr{} \SOpt{} - B_\model\ROpt{} &= 0, \label{eq:Proof:Sylv2:V}\\
			\trans{A_\model} \WMr{} - \trans{E_\model} \WMr{} \SOpt{}- \trans{C_\model}\LOpt{} &= 0, \label{eq:Proof:Sylv2:W}
			\end{align} \label{eq:Proof:Sylv2}
			\end{subequations}
			with matrices
			$\SOpt{}\teq\diag\left(\sigma_{\opt,1},\dots,\sigma_{\opt,\ro}\right)$, 
			$\ROpt{}\teq\begin{bmatrix}r_{\opt,1},\dots,r_{\opt,\ro}\end{bmatrix}$,
			$\LOpt{}\teq\begin{bmatrix}l_{\opt,1},\dots,l_{\opt,\ro}\end{bmatrix}$ and
			$\shiftOpti\tin\Complex$, $\tanDirInputOpti\tin\Complex^{\is}$, $\tanDirOutputOpti\tin\Complex^{\os}$.
			Consider the reduced Model Function $\SigmaMr{}$ resulting from the projection  ${\SigmaMr{}\teq\WMrtrans{}\SigmaM{}\VMr{}}$ and let $\GMr(s)$ denote its transfer function.
			 
			Further, assume that for every $i\teq1,\dots,\ro$, the triplets $\tripletOpti$ satisfy $\tripletOpti\teq \left(-\overline{\lambda}_{r,i},\trans{\inRes},\outRes\right)$ where $\GMr(s)\ts=\ts\sum\limits_{i=1}^{\ro} \frac{\outRes\inRes}{s-\lambda_{r,i}}$ is the pole/residue representation of the reduced Model Function.
			
			If, for every $i\teq1,\dots,\ro$, there exists a $j\teq1,\dots,\nM{}$, such that 
			\begin{equation}\label{eq:update condition}
				\tripletMj = \tripletOpti,
			\end{equation}
			then $\SigmaMr{}$ satisfies the first-order \HtwoText\ optimality conditions
			\begin{subequations}
				\begin{align*}
					G(-\bar{\lambda}_{r,i})\trans{\inRes} &=\GMr(-\bar{\lambda}_{r,i})\trans{\inRes}\\
					\trans{\outRes}G(-\bar{\lambda}_{r,i}) &= \trans{\outRes}\GMr(-\bar{\lambda}_{r,i})\\
					\trans{\outRes}G'(-\bar{\lambda}_{r,i})\trans{\inRes} &= \trans{\outRes}\GMrp(-\bar{\lambda}_{r,i})\trans{\inRes}
				\end{align*}
			\end{subequations}
			for all $i=1,\,\dots\,\ro$.
		\end{theorem}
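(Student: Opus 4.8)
The plan is to prove the three relations by \emph{transitivity of bitangential Hermite interpolation}, using the two Sylvester-equation characterizations to pin down where $\SigmaM{}$ interpolates $\Sigma$ and where $\SigmaMr{}$ interpolates $\SigmaM{}$, and then exploiting the update condition \eqref{eq:update condition} to make these interpolation data coincide at the mirror images $-\overline{\lambda}_{r,i}$. \textbf{Step 1.} Since $\SModel{}$ is diagonal, the Sylvester equations \eqref{eq:Proof:Sylv1} decouple column-by-column into the shifted linear solves of \cref{def:primitive basis}; hence, by \cref{thm:Sylvester equivalence} and \cref{thm:Sylvester equivalence:W}, $\VM{}$ and $\WM{}$ are (column for column) primitive bases for the data $\tripletMj$, $j=1,\dots,\nM{}$. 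Because $\VM{}$ and $\WM{}$ use the \emph{same} shift matrix $\SModel{}$, all three parts of \cref{thm:tangential_interpolation} apply, so $\SigmaM{}$ is a bitangential Hermite interpolant of $\Sigma$ at every triple $\tripletMj$ — i.e.\ \eqref{eq:tangential_interpolation_conditions} holds with $(\sigma_i,r_i,l_i)$ replaced by $\tripletMj$ and $G_r$ replaced by $\GM$. (This is the remark already recorded after \cref{def:model function}.)

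\textbf{Step 2.} I would then run the identical argument one level down, with $\SigmaM{}=(A_\model,E_\model,B_\model,C_\model)$ playing the role of the full-order model and $\tripletOpti$, $i=1,\dots,\ro$, the interpolation data: \eqref{eq:Proof:Sylv2} with diagonal $\SOpt{}$ forces $\VMr{},\WMr{}$ to be primitive bases \emph{for} $\SigmaM{}$, so \cref{thm:tangential_interpolation} gives $\GM(\shiftOpti)\tanDirInputOpti=\GMr(\shiftOpti)\tanDirInputOpti$, $\trans{\tanDirOutputOpti}\GM(\shiftOpti)=\trans{\tanDirOutputOpti}\GMr(\shiftOpti)$, and $\trans{\tanDirOutputOpti}\GM'(\shiftOpti)\tanDirInputOpti=\trans{\tanDirOutputOpti}\GMrp(\shiftOpti)\tanDirInputOpti$ for each $i$. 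By the standing hypothesis $\tripletOpti=(-\overline{\lambda}_{r,i},\trans{\inRes},\outRes)$ these are precisely the first-order \HtwoText\ optimality conditions \eqref{eq:OC} \emph{between $\SigmaM{}$ and $\SigmaMr{}$} — equivalently, the fixed point that IRKA reaches when run on $\SigmaM{}$.

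\textbf{Step 3.} Now fix $i$ and use \eqref{eq:update condition} to choose $j$ with $\tripletMj=\tripletOpti=(-\overline{\lambda}_{r,i},\trans{\inRes},\outRes)$. Evaluating the Step~1 interpolation identities at this index $j$, i.e.\ with $\shiftMj=-\overline{\lambda}_{r,i}$, $\tanDirInputMj=\trans{\inRes}$, $\tanDirOutputMj=\outRes$, and combining them with the Step~2 identities at $i$, I chain the equalities
\begin{align*}
  G(-\overline{\lambda}_{r,i})\trans{\inRes} &= \GM(-\overline{\lambda}_{r,i})\trans{\inRes} = \GMr(-\overline{\lambda}_{r,i})\trans{\inRes}, \\
  \trans{\outRes}G(-\overline{\lambda}_{r,i}) &= \trans{\outRes}\GM(-\overline{\lambda}_{r,i}) = \trans{\outRes}\GMr(-\overline{\lambda}_{r,i}), \\
  \trans{\outRes}G'(-\overline{\lambda}_{r,i})\trans{\inRes} &= \trans{\outRes}\GM'(-\overline{\lambda}_{r,i})\trans{\inRes} = \trans{\outRes}\GMrp(-\overline{\lambda}_{r,i})\trans{\inRes},
\end{align*}
which are exactly the asserted relations; since $i$ ranged over $1,\dots,\ro$, this finishes the proof.

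\textbf{Expected main obstacle.} The substance is entirely in Steps~1--2, namely justifying that \emph{solving} the Sylvester equations \eqref{eq:Proof:Sylv1}, \eqref{eq:Proof:Sylv2} is equivalent to \emph{spanning} the primitive Krylov directions demanded by \cref{thm:tangential_interpolation}. This needs $\SModel{}$ and $\SOpt{}$ diagonal (so each column reduces to a single shifted solve) and nonsingularity of the pencils at the relevant shifts; in particular $\shiftOpti E_\model-A_\model=-\overline{\lambda}_{r,i}E_\model-A_\model$ must be invertible, i.e.\ the mirror images of the reduced poles must not be poles of $\SigmaM{}$, which is the usual well-posedness assumption (automatic when $\SigmaM{}$ and $\SigmaMr{}$ are asymptotically stable). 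A minor combinatorial point is that coincidences among the $\shiftMj$, or between some $\shiftOpti$ and a $\shiftMj$ with matching tangential directions, only \emph{strengthen} the conclusion via higher-order moment matching (\cref{thm:moment_matching}); the existence of a \emph{single} matching triple in \eqref{eq:update condition} is all that Step~3 requires, and everything else is the bookkeeping displayed above.
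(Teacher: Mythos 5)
Your argument is correct and is essentially the paper's own proof: both establish that $\SigmaMr{}$ bitangentially Hermite-interpolates $\GM$ at the data $\tripletOpti$ via \cref{thm:tangential_interpolation} and \cref{thm:Sylvester equivalence}, that $\GM$ interpolates $G$ at the $\tripletMj$ by construction, and then chain the two sets of identities through the update condition \eqref{eq:update condition}. Your closing remarks on pencil invertibility and repeated shifts are sensible side observations but not part of the paper's argument.
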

		\begin{proof}
			By \cref{thm:tangential_interpolation} and \cref{thm:Sylvester equivalence}, construction of $\SigmaMr{}$ with the projection matrices satisfying \cref{eq:Proof:Sylv2} results in a reduced model that is a bitangential Hermite interpolant of the Model Function $\SigmaM{}$ with respect to the interpolation data $\SOpt{}$, $\ROpt{}$, and $\LOpt{}$. 
			In combination with the assumption $\tripletOpti\teq\left(-\overline{\lambda}_{r,i},\trans{\inRes},\outRes\right)$, this yields the relationship
			\begin{equation}
				\begin{aligned}
					\GM(-\bar{\lambda}_{r,i})\trans{\inRes} &=\GMr(-\bar{\lambda}_{r,i})\trans{\inRes},\\
					\trans{\outRes}\GM(-\bar{\lambda}_{r,i}) &= \trans{\outRes}\GMr(-\bar{\lambda}_{r,i}),\\
					\trans{\outRes}\GMp(-\bar{\lambda}_{r,i})\trans{\inRes} &= \trans{\outRes}\GMrp(-\bar{\lambda}_{r,i})\trans{\inRes},
				\end{aligned} \label{eq:opt proof1}
			\end{equation}
			for all $i=1,\,\dots\,\ro$.
			
			In addition, from assumption \cref{eq:update condition} and construction of $\SigmaM{}$ through projection with matrices satisfying \cref{eq:Proof:Sylv1} follows
			\begin{equation}
				\begin{aligned}
					G(-\bar{\lambda}_{r,i})\trans{\inRes} &=\GM(-\bar{\lambda}_{r,i})\trans{\inRes},\\
					\trans{\outRes}G(-\bar{\lambda}_{r,i}) &= \trans{\outRes}\GM(-\bar{\lambda}_{r,i}),\\
					\trans{\outRes}G'(-\bar{\lambda}_{r,i})\trans{\inRes} &= \trans{\outRes}\GMp(-\bar{\lambda}_{r,i})\trans{\inRes},
				\end{aligned}\label{eq:opt proof2}
			\end{equation}
			for all $i=1,\,\dots\,\ro$. Equating \cref{eq:opt proof1} and \cref{eq:opt proof2} completes the proof.
		\end{proof}
		All assumptions of \cref{thm:SigmaMr_optimality} are satisfied if $\SigmaMr{}$ results from an \HtwoText-optimal reduction of $\SigmaM{}$ and the Model Function $\SigmaM{}$ is properly updated during \cref{algo:CIRKA}. 
		In fact, the assumption \cref{eq:update condition} can be seen as an \emph{update condition} for the Model Function $\SigmaM{}$. 
		In order to guarantee that the reduced model $\SigmaMr{}$ satisfies the \HtwoText\ optimality conditions with respect to $\Sigma$, it suffices to require $\SigmaM{}$ to be a bitangential Hermite interpolant of $\Sigma$ with respect to the optimal reduction parameters $\SOpt{}$, $\ROpt{}$, $\LOpt{}$.
		
		\cref{thm:SigmaMr_optimality} proves that the new framework results---at convergence---in a reduced model $\SigmaMr{}$ locally satisfying the optimality conditions \cref{eq:OC} and hence effectively solves the \HtwoText-optimal reduction problem \cref{eq:H2 optimization problem}. 
		In addition, it is possible to show that the reduce model $\SigmaMr{}$ has the same state-space realization as one obtained through direct projection of the full model $\Sigma$.

		\begin{corollary}\label{thm:SigmaMr_Sigmar}
			Consider a full-order model $\Sigma$ as in \cref{eq:FOM}.
			Let all assumptions of \cref{thm:SigmaMr_optimality} hold.
			Further, let $V,W\tin\Complex^{\fo\times\ro}$ be projection matrices satisfying Sylvester equations of the form
			\begin{subequations}
			\begin{align}
				A V - E V \SOpt{} - B\ROpt{} &= 0, \label{eq:Proof:Sylv3:V}\\
				\trans{A} W - \trans{E} W \SOpt{}- \trans{C}\LOpt{} &= 0. \label{eq:Proof:Sylv3:W}
			\end{align}\label{eq:Proof:Sylv3}
			\end{subequations}
			Consider the reduced model $\Sigma_r$ resulting from the projection $\Sigma_r\teq\trans{W}\Sigma V$.
			Then it holds 
			\begin{equation}
				\Sigma=\SigmaMr{}.
			\end{equation}		
		\end{corollary}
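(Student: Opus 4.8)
The plan is to exploit the fact that two consecutive Petrov--Galerkin projections \emph{compose}. Since $\SigmaM{}\teq\WMtrans{}\Sigma\VM{}$ and $\SigmaMr{}\teq\WMrtrans{}\SigmaM{}\VMr{}$, the reduced Model Function is itself a projection of $\Sigma$ with the composed bases,
\begin{equation*}
	\SigmaMr{} = \trans{\bigl(\WM{}\WMr{}\bigr)}\,\Sigma\,\bigl(\VM{}\VMr{}\bigr).
\end{equation*}
Hence it is enough to show that $\VM{}\VMr{}\teq V$ and $\WM{}\WMr{}\teq W$ \emph{as matrices} --- not merely up to a change of basis --- with $V,W$ the primitive bases of \cref{eq:Proof:Sylv3}; the claim $\Sigma_r\teq\SigmaMr{}$ then follows at once, and in fact the two state-space realizations coincide entry by entry.

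First I would make the interpolation data explicit. Because $\SModel{}$ and $\SOpt{}$ are diagonal, \cref{thm:Sylvester equivalence} and \cref{thm:Sylvester equivalence:W}, together with uniqueness of the Sylvester solutions (guaranteed by the assumed non-singularity of $A-\sigma_{\mu,j}E$ and of $A-\sigma_{\ast,i}E$), show that the $j$-th column of $\VM{}$ is the primitive vector $(A-\sigma_{\mu,j}E)^{-1}B\,r_{\mu,j}$, the $i$-th column of $V$ is $(A-\sigma_{\ast,i}E)^{-1}B\,r_{\ast,i}$, and dually for $\WM{}$ and $W$. The update condition \cref{eq:update condition} supplies, for every $i\teq1,\dots,\ro$, an index $j(i)$ with $\bigl(\sigma_{\mu,j(i)},r_{\mu,j(i)},l_{\mu,j(i)}\bigr)\teq\tripletOpti$. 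Since the \emph{whole} triplet matches, the $i$-th column of $V$ equals the $j(i)$-th column of $\VM{}$ \emph{and} the $i$-th column of $W$ equals the $j(i)$-th column of $\WM{}$; equivalently, $V\teq\VM{}P$ and $W\teq\WM{}P$ for one and the same $0/1$ selection matrix $P\tin\Reals^{\nM{}\times\ro}$ whose $i$-th column is the unit vector $e_{j(i)}$.

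The key step is to verify that this $P$ is itself a solution of the inner Sylvester equations \cref{eq:Proof:Sylv2}. Multiplying \cref{eq:Proof:Sylv3:V} from the left by $\WMtrans{}$ and inserting $V\teq\VM{}P$ gives, using $A_\model\teq\WMtrans{}A\VM{}$, $E_\model\teq\WMtrans{}E\VM{}$, $B_\model\teq\WMtrans{}B$,
\begin{equation*}
	A_\model P - E_\model P\,\SOpt{} - B_\model\ROpt{} = \WMtrans{}\bigl(AV - EV\SOpt{} - B\ROpt{}\bigr) = 0 ;
\end{equation*}
multiplying \cref{eq:Proof:Sylv3:W} from the left by $\trans{\VM{}}$ and inserting $W\teq\WM{}P$ shows analogously that $P$ solves \cref{eq:Proof:Sylv2:W}. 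By uniqueness of the solutions of these (small) Sylvester equations, $\VMr{}\teq P$ and $\WMr{}\teq P$, hence $\VM{}\VMr{}\teq\VM{}P\teq V$ and $\WM{}\WMr{}\teq\WM{}P\teq W$. Substituting into the composition identity above yields $\SigmaMr{}\teq\trans{W}\Sigma V\teq\Sigma_r$.

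I expect the only real subtlety to be this passage from subspaces to matrices: the update condition by itself only delivers $\Image(V)\subseteq\Image(\VM{})$, and it is the combination of (i) the diagonal structure of $\SModel{}$, which makes each column a bona fide primitive vector, and (ii) uniqueness of the Sylvester solutions that sharpens this to the exact identities $\VM{}\VMr{}\teq V$ and $\WM{}\WMr{}\teq W$. A minor point to dispatch cleanly is the possibility of repeated triplets in $\SModel{}$, so that $j(i)$ need not be unique; this is harmless, since any two columns of $\VM{}$ (resp.\ $\WM{}$) carrying the same triplet coincide, so $P$ can be chosen consistently for $V$ and $W$ simultaneously.
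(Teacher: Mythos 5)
Your proof is correct, and it reaches the same intermediate goal as the paper --- establishing the matrix identities $V\teq\VM{}\VMr{}$ and $W\teq\WM{}\WMr{}$ and then composing the two projections --- but by a genuinely different mechanism. The paper argues in the reverse direction: it starts from the projected Sylvester identity $\WMtrans{}\bigl(A\VM{}\VMr{}-E\VM{}\VMr{}\SOpt{}-B\ROpt{}\bigr)=0$, pairs the $j$\textsuperscript{th} row of $\WMtrans{}$ with the $i$\textsuperscript{th} column for the index pairs supplied by the update condition \cref{eq:update condition}, and then forces each column of $\VM{}\VMr{}$ to equal the corresponding primitive vector by an appeal to the identity holding ``for all possible $C$''. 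You instead construct the candidate explicitly: the update condition plus column-wise uniqueness of the outer Sylvester solutions gives $V\teq\VM{}P$ and $W\teq\WM{}P$ for a single $0/1$ selection matrix $P$, and projecting \cref{eq:Proof:Sylv3} shows $P$ solves both inner Sylvester equations \cref{eq:Proof:Sylv2}, whence $\VMr{}\teq\WMr{}\teq P$ by uniqueness. This buys two things: it sidesteps the paper's ``for all $C$'' step (the least robust link in the published argument, since $C$ is a fixed matrix), and it exhibits the concrete structure of $\VMr{}$ and $\WMr{}$ as a column-selection matrix, which makes the equality of realizations transparent rather than merely asserted column by column. The price is that you lean explicitly on uniqueness of the \emph{reduced} Sylvester equations, i.e.\ on nonsingularity of $A_\model-\shiftOpti E_\model$ for all $i$; this is implicit in the standing assumptions (it is needed anyway for $\SigmaMr{}$ to be a well-defined bitangential interpolant of $\SigmaM{}$ in \cref{thm:SigmaMr_optimality}), but you should state it once. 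Your closing remarks on the subspace-versus-matrix distinction and on repeated triplets in $\SModel{}$ are exactly the right points to flag; the latter degenerate case is not treated by the paper either.
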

		\begin{proof}
			The proof amounts to showing that the projection matrices used to obtain $\Sigma_r$ and $\SigmaMr{}$ from projection of $\Sigma$ are equal, hence 
			\begin{subequations}
				\begin{align}
					V&=\VM{}\VMr{}, \label{eq:Proof:VtoVr} \\
					W&=\WM{}\WMr{}. \label{eq:Proof:WtoWr}
				\end{align}
			\end{subequations}
			Consider the Sylvester equation \cref{eq:Proof:Sylv2:V}, for which holds from the definition of $\SigmaM{}$
			\begin{equation}
				\WMtrans{}\left(A\VM{}\VMr{} - E\VM{}\VMr{} \SOpt{} - B\ROpt{}\right) = 0.\label{eq:Proof Corollary: Sylvester M2}
			\end{equation}
			Obviously, by comparing this equation to \cref{eq:Proof:Sylv3:V}, the relation \cref{eq:Proof:VtoVr} is \emph{sufficient} to show that \cref{eq:Proof Corollary: Sylvester M2} is satisfied. 
			In order to show that \cref{eq:Proof:VtoVr} is also \emph{necessary}, we assume that the term in the brackets does not vanish. 
			By defining $\widetilde{V}\defeq\VM{}\VMr{}$, the product \cref{eq:Proof Corollary: Sylvester M2} can be rewritten as
			\begin{equation}
				\begin{bmatrix}
					\vdots \\
					\trans{\tanDirOutputMj}C\inv{\left(A-\shiftMj E\right)}\\
					\vdots
				\end{bmatrix}
				\cdot
				\left(A\widetilde{V}- E\widetilde{V} \begin{bmatrix} \ddots &&\\ & \shiftOpti &\\ &&\ddots\end{bmatrix} - B\begin{bmatrix}
					\cdots & \tanDirInputOpti &\cdots \end{bmatrix}\right) = 0, \label{eq:Proof Corollary: Sylvester M2b}
			\end{equation}
			which needs to hold true for all $j=1,\dots,\nM{}$ and $i=1,\dots,\ro$.
			Due to the update condition \eqref{eq:update condition}, for every $i$ there exists a $j$ such that $\tripletMj\teq\tripletOpti$. For all such $i,j$ combinations, we can hence consider the inner product between the $j\textsuperscript{th}$ row of $\WMtrans{}$ and the $i\textsuperscript{th}$ column in the bracket
			\begin{align}
					\trans{\tanDirOutputOpti}C\inv{\left(A-\shiftOpti E\right)}\left(\left(A -\shiftOpti E\right)\widetilde{V}\ei - B\tanDirInputOpti\right) &= 0, \nonumber \\
					\iff \trans{\tanDirOutputOpti}C\left(\widetilde{V}\ei - \inv{\left(A-\shiftOpti E\right)}B\tanDirInputOpti\right) &= 0, \nonumber\\
					\iff \trans{\tanDirOutputOpti}C\left(\widetilde{V}\ei - \inv{\left(A-\shiftOpti E\right)}B\tanDirInputOpti\right) &= 0,\label{eq:Prof Corollary: SylvesterM3}
			\end{align}
			where $e_i\in\Reals^{\ro}$ is a vector with 1 on the i\textsuperscript{th} entry and otherwise 0.
			Since \cref{eq:Prof Corollary: SylvesterM3} needs to hold true for all possible $C$, it finally follows that
			\begin{equation}
				\widetilde{V}\ei = \inv{\left(A-\shiftOpti E\right)}B\tanDirInputOpti,\label{eq:Proof Corollary: Sylvester M4}
			\end{equation}
			which by \cref{thm:Sylvester equivalence} and \cref{eq:Proof:Sylv3:V} corresponds exactly to $V\ei$.
			As \cref{eq:Proof Corollary: Sylvester M4} holds true for all columns $i\teq1,\dots,\ro$, we finally obtain $\widetilde{V}=\VM{}\VMr{}=V$. The proof for $W$ is analogous.
		\end{proof}
		The results of \cref{thm:SigmaMr_Sigmar} require all projection matrices to be primitive bases (cp. \cref{def:primitive basis}). In practical applications, this is not the case. In general, the realizations $\SigmaMr{}$ and $\Sigma_r$ will be \emph{restricted system equivalent}, i.e. sharing the same order and transfer function \cite{morMayA07}.

	\subsection{Interpretations of the New Framework}
		The new framework introduced in this section, which we refer to as the \emph{Model Function} framework in general or Confined IRKA (CIRKA) when applied to IRKA, can be given different interpretations.
		
		On the one hand, it is a form of \emph{surrogate optimization} \cite{queipo2005surrogate,forrester2008engineering} in that the \HtwoText\ optimization is not conducted on the actual cost function $\mathcal{J}\ts=\ts\norm{G-G_r}{\Htwo}$ but on an approximation ${\mathcal{J}\ts\approx\ts\widehat{\mathcal{J}}\ts=\ts \norm{\GM-G_r}{\Htwo}}$.
		In fact, the framework itself is an application of \emph{reduced-model based optimization} \cite{morBenTT13,legresley2000airfoil,han2005efficient}. 
		On the other hand, the framework can be seen as \HtwoText\ optimization in a subspace, defined by the matrices $\VM{}$ and $\WM{}$, that is updated at every iteration. In fact, this technique could be interpreted as a \emph{subspace acceleration} method \cite{Rommes_2006,sirkovic2016subspace,washio1997krylov} to recycle information obtained in a previous optimization step. 
		Traditionally, subspace acceleration methods in numerics are used to ameliorate the convergence of iterative methods. In our setting, this becomes the convergence to a set of optimal reduction parameters.
		Finally, one may think of it as a \emph{restarted} \HtwoText\ optimization, where e.g. IRKA is restarted in a higher-dimensional subspace after convergence.

	\subsection{The Cost of the New Framework}\label{sec:costOfNewFramework}
		The discussion so far has shown how the reduced order model $\SigmaMr{}$, resulting from the Model Function framework, indeed satisfies \HtwoText-optimality conditions. What still remains to be discussed is whether we can expect this framework to be computationally less demanding than simply applying \HtwoText-optimal reduction to the full order model.
		
		To address this question, we compare the cost of IRKA, estimated in \cref{eq:Cost of IRKA}, to the cost of CIRKA.
		Following similar considerations as in \cref{sec:costOfH2}, we obtain the relationship
		\begin{equation}
		     \Cost{\text{CIRKA}}{\fo}\approx  \underbrace{\sum\limits_{k=1}^{\kCirka}\nM{k,+}\cdot\Cost{\text{LSE}}{\fo}}_{\text{cost of reduction}} + \underbrace{\sum\limits_{k=1}^{\kCirka}2\ro\cdot\kIrka^k\cdot\Cost{\text{LSE}}{\nM{k}}}_{\text{cost of optimization}}
			\label{eq:Cost of CIRKA}
		\end{equation}	
		where $\kCirka$ is the number of iterations of the new framework, $\nM{k,+}$ indicates the size of the Model Function update in each step, and $\kIrka^k$ is the number of IRKA iterations in each step of the new framework.
		The first summand represents the cost of updating the Model Function with information of the full-order model and could be interpreted as the \emph{cost of reduction}.
		The second summand adds up the cost of IRKA in each iteration and depends on the number of optimization steps in IRKA $\kIrka^k$, as well as $\nM{k}$-dimensional LSEs. 
		As it can be seen from \cref{eq:Cost of CIRKA}, the cost involved in finding optimal parameters is in some sense \emph{decoupled} from the cost of reduction, although a link through $\kCirka$ still remains.
		Obviously, as $\fo$ increases, we expect $\Cost{\text{LSE}}{\fo}\ts\gg\ts\Cost{\text{LSE}}{\nM{i}}$. 
		Hence the dominant cost is represented by the cost of reduction, whereas the number of optimization steps $\kIrka^k$ plays no role.
		
		Therefore, comparing \cref{eq:Cost of IRKA} and \cref{eq:Cost of CIRKA} we expect the new framework to be cost-effective as long as
		\begin{equation}
			  \sum\limits_{k=1}^{\kCirka}\nM{k,+} < 2\ro\cdot\kIrka,
			\label{eq:Cost comparison}
		\end{equation}
		where $\kIrka$ in this case represents the number of iterations required by IRKA to directly reduce the full order model.
		In \cref{sec:numericalresults}, numerical results will illustrate the substantial speedup that can be achieved by using this framework.
		
	\subsection{Further Advantages of the New Framework}\label{sec:advantagesOfNewFramework}
		We conclude this section by addressing a major advantage that this framework bares, compared to conventional \HtwoText\ optimization, in addition to the speedup already discussed.
		In fact, in the process of generating an \HtwoText-optimal reduced-order model $\Sigma_r$, this framework naturally yields an \emph{additional} reduced-order model $\SigmaM{}$ at no additional cost. 
		This Model Function has a reduced order $\nM{}\ts>\ts\ro$ and is---though not being \HtwoText-optimal---in general a better approximation to $\Sigma$ than $\SigmaMr{}$. This is especially true if update strategies U.1 or U.2 are used.
		For this reason, it is possible to use this free information about the full model, e.g. to estimate the relative reduction error $\error$ by 
		\begin{equation}
			\error = \frac{\norm{G-G_r}{\Htwo}}{\norm{G}{\Htwo}} \approx \frac{\norm{\GM-G_r}{\Htwo}}{\norm{\GM}{\Htwo}} =: \errorEst. \label{eq:errorEst}
		\end{equation}
		As error estimation in model reduction by tangential interpolation is still a big open challenge, this type of indicator is certainly of great interest, even though no claim about its rigorosity can be made.
		Note that this error estimation is very similar to the one in \cite{morCasBG17}, where data-driven approaches are used to generate a surrogate model from data acquired during IRKA.
		As reduced models obtained by projection can become unstable, in general it is necessary to take the stable part of $\SigmaM{}$ after convergence to be able to evaluate \cref{eq:errorEst}. As $\nM{}$ is small, This can be easily computed, for example in MATLAB using the \mcode{stabsep} command. 
		Note that even though in theory no claim can be made about the stability of reduced order models obtained by IRKA, in practice more often then not the resulting reduced models are stable \cite{}. We make the same observation for CIRKA.
		
		In addition, by making the cost of optimization negligible with respect to the cost of reduction, the new framework allows to introduce \emph{globalized} local optimization approaches \cite{pinter2013global}, where the local reduction is performed from different start points, possibly finding several local minima and increasing the chance of finding the global minimum. 
		Global \HtwoText\ optimal reduction based on the Model Function framework is a topic of current research and will be presented in a separated paper.
		
		

\section{Numerical Results}\label{sec:numericalresults}

\subsection{CD Player Model}
	We compare CIRKA to IRKA on the model \mcode{CDplayer} taken from the benchmark collection \cite{morChaV02}.
	The model has a low full order $\fo\teq120$ and can therefore be used for illustrative examples. 
	It has $\is\teq\os\teq2$ inputs and outputs. 
	\HtwoText-optimal reduced models for the reduced orders ${\ro\teq10, 20, 30}$ are constructed using both IRKA and CIRKA,  
	run using the \mcode{sssMOR} toolbox implementations. The execution parameters were set to default, in particular the chosen convergence tolerance was set to \mcode{Opts.tol = 1e-3} and the convergence was determined by inspection of the optimal frequencies $\shiftOpti$ (\mcode{Opts.stopCrit = 's0'}) in IRKA and optimal frequencies $\shiftOpti$ and tangential directions $\tanDirInputOpti$, $\tanDirOutputOpti$ (\mcode{Opts.stopCrit = 's0+tanDir'}) in CIRKA. 
	The initial frequencies in $S^0$ were all set to $0$, whereas all initial tangential directions in $R^0$ and $L^0$ were set to $\trans{\begin{bmatrix}1 & 1	\end{bmatrix}}$.
	In CIRKA, strategies I.2 for initialization and U.2 for Model Function updates were used.

	\begin{table}[h!]
	\centering
	\caption{Reduction results for the \mcode{CDplayer} model using zero initialization.}
	\label{tab:CDplayer:zero}
	\begin{tabular}{c|ccc|ccccc}
		\toprule
		& \multicolumn{3}{c}{IRKA} & \multicolumn{5}{c}{CIRKA} \\
		\midrule
		$\ro$ 	& $\kIrka$ 	& $\nLU$	& $\error$ 	& $\kCirka$ 	& $\sum\kIrka$ 	& $n_{LU}$ 	& $\error$ 	& $\errorEst$ \\
		\midrule
		10 		& 11 		& \textcolor{red}{52} 		& 5.92e-5 	& 3 			& 75 			& \textcolor{green}{10} 		& 5.92e-5  	& 5.72e-5 \\
		20 		& 50 		& \textcolor{red}{493} 		& 1.64e-5 	& 3 			& 150 			& \textcolor{green}{14} 		& 1.64e-5  	& 8.16e-6 \\
		30 		& 29 		& \textcolor{red}{438} 		& 1.76e-6 	& 4 			& 76 			& \textcolor{green}{25} 		& 1.76e-6  	& 1.77e-6 \\
		\bottomrule			
	\end{tabular}
\end{table}

	%
	The results are summarized in \cref{tab:CDplayer:zero}.
	For the case $\ro\teq10$, IRKA converged within $\kIrka\teq11$ steps, requiring $\nLU\teq52$ full-dimensional LU decompositions. On the other hand, CIRKA converged after $\kCirka\teq3$ steps (i.e. two updates of $\SigmaM{}$). Even though in sum it required 75 optimization steps to find the optimal frequencies (as opposed to 11 for IRKA), the number of full-dimensional LU decompositions was only $\nLU\teq10$, i.e. approximately five times less than IRKA. 
	In addition, using the Model Function $\SigmaM{}$, we are able to estimate the reduction error accurately to the first significant digit.
	For the reduced orders $\ro\teq20$ and  $\ro\teq30$, IRKA required more steps for convergence, resulting in a much higher number of full-dimensional LU decompositions. In fact, for the case $\ro\teq20$, the maximum iteration tolerance of \mcode{Opt.maxiter=50} was reached without convergence. Note that in this case, no claim about the optimality of the reduced model can be made, even though in practice IRKA models tend to yield good approximations even without convergence.
	Also CIRKA required more optimization steps for the case $\ro\teq20$, however the optimal parameters converged within $\kCirka\teq3$ steps, thus limiting the number of full dimensional LU decomposition to $14$, i.e. over 30 times less than IRKA.
	
	\setlength{\mywidth}{12cm}
	\setlength{\myheight}{6cm}
	\begin{figure}[h!]
		\centering
		\input{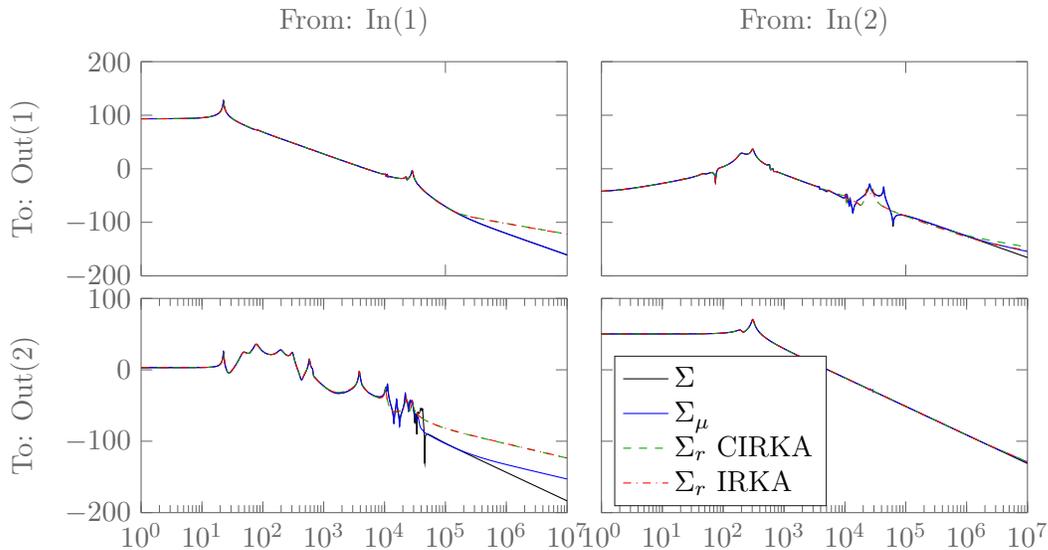}
		\caption{Reduction of the CDplayer model using CIRKA and IRKA ($\ro=30$).}
		\label{fig:CDplayer}
	\end{figure}
	\cref{fig:CDplayer} shows the bode plots of full and reduced models for the case $\ro\teq30$.
	As it can be seen, the reduced models obtained from IRKA and CIRKA are the same. This is also reflected in \cref{tab:CDplayer:zero}, where the relative reduction errors $\error$ are the same for both algorithms. 
	In addition, the blue line shows the Model Function $\SigmaM{}$, which is a more accurate approximation to the full model $\Sigma$, motivating its use for error estimation.
	
	To analyze the dependency of the results in \cref{tab:CDplayer:zero} from the initialization, \cref{tab:CDplayer:eigs} summarizes the results for initial frequencies and tangential directions corresponding to the mirrored eigenvalues with smallest magnitude and respective input/output residues of the full model $\Sigma$.
	This choice of initial parameters is known to be advantageous \cite{Gugercin_PhD,Gugercin_2003_Error,morGugAB08,morPan14}, its computation can be performed efficiently for large-scale sparse models using iterative methods (cp. e.g. the MATLAB command \mcode{eigs}).
	As it can be seen from \cref{tab:CDplayer:eigs}, the results are similar to the ones obtained through the previous initialization. 
	In particular, it is worth noticing that for the case $\ro\teq20$, also CIRKA suffers from the convergence issues within IRKA, requiring $\kCirka\teq9$ iterations and $\sum\kIrka\teq450$ optimization steps to converge. Nonetheless, as only a few frequencies and tangential directions exhibit slow convergence, the update cost is minimal, resulting in a total of $\nLU\teq30$ full-dimensional LU decompositions required to find an optimum. This indicates that CIRKA can in some sense mitigate the effect of slow convergence of IRKA.
	\begin{table}[h!]
	\centering
	\caption{Reduction results for the \mcode{CDplayer} model using \mcode{eigs} initialization.}
	\label{tab:CDplayer:eigs}
	\begin{tabular}{c|ccc|ccccc}
		\toprule
		& \multicolumn{3}{c}{IRKA} & \multicolumn{5}{c}{CIRKA} \\
		\midrule
		$\ro$ 	& $\kIrka$ 	& $\nLU$					& $\error$ 	& $\kCirka$ 	& $\sum\kIrka$ 	& $n_{LU}$ 						& $\error$ 	& $\errorEst$ \\
		\midrule
		10 		& 7 		& \textcolor{red}{36} 		& 5.92e-5 	& 3 			& 69 			& \textcolor{green}{14} 		& 5.92e-5  	& 5.72e-5 \\
		20 		& 50 		& \textcolor{red}{501} 		& 1.64e-5 	& 9 			& 450 			& \textcolor{green}{30} 		& 1.64e-5  	& 1.24e-5 \\
		30 		& 27 		& \textcolor{red}{420} 		& 2.11e-6 	& 4 			& 65 			& \textcolor{green}{29} 		& 2.11e-6  	& 2.16e-6 \\
		\bottomrule			
	\end{tabular}
\end{table}

	%

\subsection{Butterfly Gyroscope Model}
	In this section, we compare IRKA and CIRKA in the reduction of a model of larger size, taken from the benchmark collection \cite{morKorR05}, and include a comparison of reduction times.
 	The model (\mcode{gyro}) represents a micro-electro mechanical Butterfly gyroscope \cite{Lienemann_2004} and has a full order of $\fo\tapprox 35'000$, $\is\teq1$ input and $\os\teq12$ outputs.
	The computations were conducted on an Intel\textsuperscript{\textregistered} Xeon\textsuperscript{\textregistered} CPU @ \unit[2.27]{GHz} computer with four cores and \unit[48]{GB} RAM. 
	The results for zero initialization are summarized in \cref{tab:gyro:zero}.
	
	\begin{table}[h!]
	\centering
	\caption{Reduction results for the \mcode{gyro} model using zero initialization.}
	\label{tab:gyro:zero}
	\begin{tabular}{c|cccc|cccccc}
		\toprule
		& \multicolumn{4}{c}{IRKA} & \multicolumn{6}{c}{CIRKA} \\
		\midrule
		$\ro$ 	& $\kIrka$ 	& $\nLU$	& $t /s$ & $\error$ 	& $\kCirka$ & $\sum\kIrka$ 	& $n_{LU}$ & $t /s$	& $\error$ 	& $\errorEst$ \\
		\midrule
		10 	& 50 & \textcolor{red}{246} & \textcolor{red}{887} & 0.45 	& 4 & 110 	& \textcolor{green}{8} 	& \textcolor{green}{40} & 0.21	& 0.26 \\
		20 	& 7  & \textcolor{red}{61}  & \textcolor{red}{226} & 0.12 	& 2 & 19 	& \textcolor{green}{11} & \textcolor{green}{51} & 0.12  & 0.04 \\
		30 	& 24 & \textcolor{red}{356} & \textcolor{red}{1281}& 0.05   & 2 & 37 	& \textcolor{green}{17} & \textcolor{green}{83} & 0.05	& 3.32e-3 \\
		\bottomrule			
	\end{tabular}
\end{table}

	
	For all three cases considered, CIRKA required significantly less full-dimensiona LU decompositions compared to IRKA. This is reflected also in the significant speedup in reduction time, ranging from four to over 20 times faster than IRKA. 
	Note that for the case $\ro\teq10$, IRKA did not converge within the maximum number of steps. On the other hand, CIRKA converges to a local optimum, hence reaching a lower relative approximation error.
	As the model represents an oscillatory system with weakly damped eigenfrequencies and many outputs, it is harder to approximate well with a low reduced order, as the relative approximation errors in \cref{tab:gyro:zero} reflect. Nevertheless, a comparison of the reduced models in a Bode diagram shows a good approximation, as it is depicted  in \cref{fig:gyro} exemplary for the case $\ro\teq30$ and the 9\textsuperscript{th} output.
	\setlength{\mywidth}{12cm}
	\setlength{\myheight}{4cm}
	\begin{figure}[h!]
		\centering
		\input{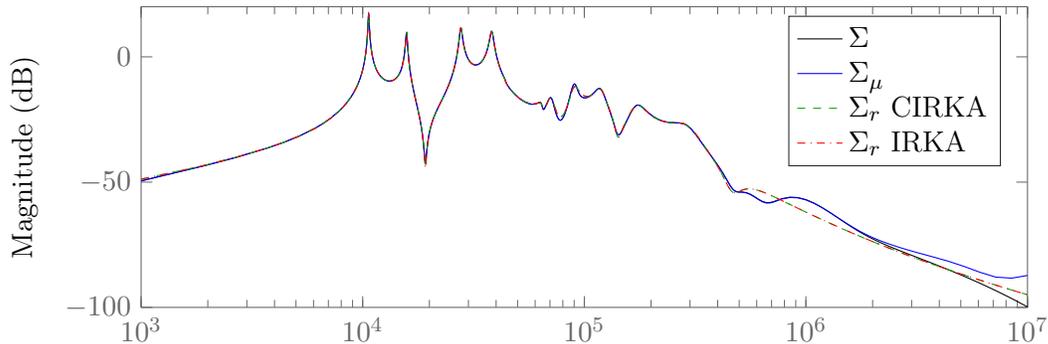}
		\caption{Reduction of the \mcode{gyro} model using CIRKA and IRKA ($\ro\teq30$, 9\textsuperscript{th} output).}
		\label{fig:gyro}
	\end{figure}
	
	The same reduction was conducted initializing frequencies and tangential directions based on the eigenvalues with smallest magnitude and corresponding residual directions. The results are shown in \cref{tab:gyro:eigs}.
	
	\begin{table}[h!]
	\centering
	\caption{Reduction results for the \mcode{gyro} model using \mcode{eigs} initialization.}
	\label{tab:gyro:eigs}
	\begin{tabular}{c|cccc|cccccc}
		\toprule
		& \multicolumn{4}{c}{IRKA} & \multicolumn{6}{c}{CIRKA} \\
		\midrule
		$\ro$ 	& $\kIrka$ 	& $\nLU$	& $t /s$ & $\error$ 	& $\kCirka$ & $\sum\kIrka$ 	& $n_{LU}$ & $t /s$	& $\error$ 	& $\errorEst$ \\
		\midrule
		10 	& 1  & \textcolor{green}{5} & \textcolor{green}{19}& 0.28 	& 2 & 8 	& \textcolor{red}{6} 	& \textcolor{red}{25}   & 0.41	& 0.26 \\
		20 	& 9  & \textcolor{red}{90}  & \textcolor{red}{323} & 0.17 	& 2 & 30 	& \textcolor{green}{14} & \textcolor{green}{60} & 0.12  & 0.05 \\
		30 	& 9  & \textcolor{red}{136} & \textcolor{red}{493} & 0.03   & 2 & 40 	& \textcolor{green}{22} & \textcolor{green}{96} & 0.02	& 5.57e-3 \\
		\bottomrule			
	\end{tabular}
\end{table}

	
	Interestingly, for the case $\ro\teq10$, IRKA converged to the default tolerance in one step, requiring only 5 LU decompositions. As CIRKA required two iterations until convergence, the number of full-dimensional LU decomposition required were 6. In this special case, IRKA was faster than CIRKA. In addition, comparison of the relative \HtwoText\ error indicates that IRKA converged to a better local minimum.
	A comparison with the respective result of \cref{tab:gyro:zero} demonstrates that initialization of \HtwoText-optimal reduction can have a large impact on the optimization and the results.
	For the other two cases, the pattern is similar to what discussed so far. In particular, the speedup obtained through CIRKA is significant.
	
	Finally, note that even when IRKA converges in few steps (e.g. 7 or 9) a reduction using CIRKA can already result in a significant speedup. Also note that in general, the relative error estimation $\errorEst$ tends to \emph{under}estimate the approximation error, especially for models with high dynamics.
	



\section{Application to Different System Classes}\label{sec:furtherApplications}
	The discussion of this contribution has been limited to the model reduction of linear time-invariant systems as in \eqref{eq:FOM}, described by a set of ordinary differential equations. 
	However, \HtwoText\ approximation approaches exist for other system classes as well, including systems of Differential Algebraic Equations (DAEs) \cite{morGugSW13}, systems for which only transfer function evaluations or measurements are available \cite{morBeaG12}, linear systems with time delays \cite{DUFF20167}, linear systems in port-Hamilton form \cite{GUGERCIN20121963}, as well as nonlinear systems in bilinear \cite{morBenB12b,morFlaG15} or quadratic-bilinear form \cite{benner2016mathcal}.
	
	Obviously, going into the details of all approximation methods above would exceed the scope of this paper. 
	Nonetheless, we feel that also \HtwoText\ algorithms for different system classes could greatly benefit from an approach similar to the one presented in this contribution. 
	Generally speaking, the idea of surrogate optimization is certainly not new.
	However, as most methods above are based on \emph{interpolation}, an \emph{update} of the surrogate model as presented here can ensure interpolation of the full model at the optimal frequencies, which would be otherwise lost.
	Once more, we emphasize that the idea presented in this contribution truly is a \emph{general framework} and not just an additional reduction algorithm by the name of CIRKA.
	In the following, we briefly indicate for some system classes, how the principle of this new framework could be applied. The goal is to reduce the computational cost while still satisfying the same conditions as the original algorithms.
	
	\subsection{DAE Systems}
		DAE systems are state-space representations as in \cref{eq:FOM} having a singular matrix $E$. 
		Their transfer function can be generally decomposed into the sum of a strictly proper part $G_{sp}(s)$, satisfying $\lim\limits_{s\to\infty}G_{sp}(s)\teq0$, and a polynomial part $P(s)$, a polynomial in $s$ of order at most $\nu$, the so called \emph{index} of the DAE \cite{morBenS15}.
		Due to this polynomial contribution, approximation by tangential interpolation is not sufficient to prevent the reduction error to become unbounded, which is a result of a mismatch in the polynomial part. For this reason, in the reduction the polynomial part needs to be matched exactly, while the strictly proper part can be approximated through tangential interpolation.
		To achieve tangential interpolation of the strictly proper part while preserving the polynomial part, the subspace conditions in \cref{eq:input_Krylov}, \cref{eq:output_Krylov} can be adapted by including \emph{spectral projectors} onto \emph{deflating subspaces} \cite[Theorem 3.1]{morGugSW13}. 
		Based on this, Gugercin, Stykel and Wyatt show in \cite[Theorem 4.1]{morGugSW13} that interpolatory \HtwoText-optimal reduction of DAEs can be performed by using the modified subspace conditions to find an \HtwoText\ approximation to the strictly proper part $G_{sp}(s)$.
		
		For this system class, an extension of the Model Function framework is quite straightforward: 
		By computing a Model Function following the modified interpolatory subspace conditions of \cite[Theorem 3.1]{morGugSW13}, \HtwoText\ optimization can be performed on the surrogate instead of on the original DAE. An update of the Model Function finally leads to a reduced order model that tangentially interpolates the strictly proper part of the original DAE at the optimal frequencies, along the optimal right and left tangential directions.

	\subsection{Port-Hamiltonian Systems}
		Port-Hamiltonian (pH) formulations of linear dynamical systems are a very effective way to model systems that result from the interconnection of different subsystems, especially in a multi-physics domain. 
		Their representation is similar to \cref{eq:FOM}, where the system matrices have the special structures $E\teq I$, $A\teq(J-R)Q$, and $C\teq\trans{B}Q$, $J$ being a skew-symmetric interconnection matrix, $R$ a positive semi-definite dissipation matrix and $Q$ a positive definite energy matrix \cite{van2000l2}.
		Models in pH form bear several advantages, such as passivity, and it is therefore of great relevance to preserve the pH structure after reduction. In a setting of tangential interpolation, this can be achieved by computing a projection matrix $V$ according to \cref{eq:input_Krylov} and choosing $W$ in order to retain the pH structure (cp. \cite{Wolf_2010,GUGERCIN20121963}).
		
		Gugercin, Polyuga, Beattie and van der Schaft present in \cite{GUGERCIN20121963} an iterative algorithm by the name of IRKA-PH, that adaptively chooses interpolation frequencies and input tangential directions, inspired by the IRKA iteration. In general, the resulting reduced model will not satisfy the first-order optimality conditions \cref{eq:OC}, as $W$ is not chosen to enforce bitangential Hermite interpolation but rather structure preservation. 
		Nonetheless, the algorithm has shown to produce good approximations in practice.
		
		To reduce the cost involved in repeatedly computing the projection matrices \cref{eq:input_Krylov} in IRKA-PH, a Model Function in pH form could be introduced and updated. 
		At convergence, the reduced model satisfies the condition \cref{eq:OC1} while being of pH-form as in the IRKA-PH case. In addition, the resulting Model Function (also in pH form) could be used e.g. for error estimation.
		
	\subsection{TF-IRKA}
		Gugercin and Beattie demonstrate in \cite{morBeaG12} how to construct a reduced order model $G_r(s)$ satisfying first-order \HtwoText\ optimality conditions \cref{eq:OC} only from evaluations of the transfer function $G(s)$. 
		The algorithm by the name of TF-IRKA exploits the \emph{Loewner-Framework}, a data-driven approximation approach by Mayo and Antoulas \cite{morMayA07} that produces a reduced model that tangentially interpolates the transfer function of a given system, whose transfer behavior can be measured or evaluated at selected frequencies. 
		Therefore, this algorithm can be used to obtain a reduced order model satisfying optimality conditions for irrational, infinite dimensional dynamical systems for which an expression for $G(s)$ can be obtained by direct Laplace transformation of the partial differential equation, i.e. without prior discretization. In addition, it can be used for optimal approximation of systems where no model is available but the transfer behavior $G(s)$ can be measured in experiments. Similar approaches have been derived to obtain reduced order models for systems with delays \cite{morDufPVS15}.
		
		The cost of TF-IRKA is dominated by the evaluation of the transfer function $G(s)$. In general, if an analytic expression is given, this cost is minimal compared to evaluating a discretized high-order model. However, if the evaluation is obtained through costly measurements, an approach to reduce the number of evaluations would be highly beneficial. 
		In \cite{morBeaDG15}, Beattie, Drma\v{c}, and Gugercin introduce a quadrature-based version of TF-IRKA for SISO models, called Q-IRKA, that only requires the evaluation of $G(s)$ once for some frequencies (quadrature nodes) and returns a reduced order model that satisfies the optimality conditions \cref{eq:OC} within the quadrature error. 
		In \cite{morDrmGB15}, the same authors propose a similar approach by the name of QuadVF, based on \emph{vector fitting} rational approximations using  frequency sampling points
		
		By exploiting the new framework presented in this paper, it is possible to reduce the number of evaluations of $G(s)$ \emph{and} satisfy the optimality conditions \cref{eq:OC} exactly. By creating a surrogate using the Loewner-Framework, standard IRKA can be applied to obtain a new set of frequencies and tangential directions. Evaluating (or measuring) $G(s)$ for the new optimal frequencies allows to update the surrogate and repeat the process. Compared to Q-IRKA, this approach bears the additional advantage of automatically finding the amount and position of suitable frequencies for which the evaluation of $G(s)$. In contrast, Q-IRKA and QuadVF require the initial evaluation of $G(s)$ at some pre-specified quadrature nodes. While a judicious spacing of the sampling frequencies in QuadVF was presented in \cite{morDrmGB15}, the frequency range and number of sampling points remains a tunable parameter within this approaches.
	
	\subsection{Nonlinear Systems}
		Model reduction approaches for general nonlinear systems typically differ from what discussed in this contribution. 
		However, in the case of weak nonlinearities, a bilinear formulation can be obtained by means of Carleman Bilinearization \cite{rugh1981nonlinear}. 
		For this class of nonlinear systems, the dynamic equations are similar to \cref{eq:FOM} and are complemented by an additional term that sums the weighted product between the state and the inputs.
		Recently, Benner and Breiten have presented in \cite{morBenB12b} a Bilinear IRKA (B-IRKA) algorithm that produces a reduced bilinear model satisfying, at convergence, first-order optimality conditions with respect to the bilinear \HtwoText\ norm for the approximation error. This algorithm requires the repeated solution of two bilinear Sylvester equations, which can be obtained either by vectorization, solving linear systems of the dimension $\fo\,\ro$, or, equivalently, computing the limit to an infinite series of linear Sylvester equations.
		Flagg and Gugercin have demonstrated in \cite{morFla12,morFlaG15} for SISO models that the optimality conditions in \cite{morBenB12b} can be interpreted as \emph{interpolatory} conditions of the underlying Volterra series (\cite[Theorem 4.2]{morFlaG15}). In addition, they present an algorithm to construct reduced bilinear models that achieve multipoint Volterra series interpolation by construction (cp. \cite[Theorem 3.1]{morFlaG15}), requiring the solution of two bilinear Sylvester equations.
		Exploiting the often fast convergence of the Volterra integrals, they propose a Truncated B-IRKA (TB-IRKA) that satisfies the first-order optimality conditions asymptotically as the truncation index tends to infinity. 
		Note that extensions to bilinear DAEs were presented by Benner and Goyal in \cite{morBenG16}.
		
		The interpolatory nature of this \HtwoText\ approach makes it possible to extend the new framework presented in this paper to bilinear systems, hopefully speeding up the reduction with B-IRKA significantly.
		In fact, given a set of initial frequencies, a bilinear Model Function can be constructed that interpolates the Volterra series at this frequencies. This would require the solution of the full bilinear Sylvester equations for a few selected frequencies. Using this bilinear surrogate model, B-IRKA can be run efficiently on a lower dimension to find a new set of optimal frequencies and update the Model Function. Note that the cost of solving a full-dimensional bilinear Sylvester equation plays the same role in the complexity of B-IRKA as the cost of a full-dimensional LU decomposition for linear systems. Assuming the convergence behavior of the new framework is similar to the linear case, this could lead to an algorithm that produces reduced bilinear models that satisfy the \HtwoText\ optimality conditions \emph{exactly}---i.e. without truncation error---at a far lower cost.
		
		In presence of strong nonlinearities, the dynamics can be captured \emph{exactly} in a quadratic bilinear model where, in addition, a quadratic term in the state variable is present. An \HtwoText\ approach for this system class has been recently presented by Benner, Goyal and Gugercin in \cite{benner2016mathcal} and it is not evident at this point, how the new framework could be applied to this system class while preserving rigor of the results.


\section{Conclusions and Outlook}\label{sec:conclusions}
	In this paper we have presented a new framework for the \HtwoText-optimal reduction of MIMO linear systems. This new framework is based on the local nature of tangential interpolation and \HtwoText\ optimality. 
	By means of \emph{updated} surrogate optimization, the cost of reduction can be decoupled from the cost of optimization.
	Several numerical examples have illustrated the effectiveness of the new framework in reducing the cost of \HtwoText-optimal reduction. 
	Theoretical considerations have demonstrated how the reduced order models resulting from this new framework still satisfy first-order optimality conditions at convergence. 
	First indications have been given on how to extend this new framework for \HtwoText\ reduction of different system classes, e.g. DAEs and bilinear systems.
	The new framework not only produces optimal reduced models at a far lower cost than conventional methods, but also provides---at not additional cost---a middle-sized surrogate model that can be used e.g. for error estimation.
	
	Current research endeavors exploit the new framework to obtain the \emph{global} optimum amongst all local \HtwoText\ optimal reduced models of prescribed order \cite{morCasHL18}. In addition, the Model Function can be used for error estimation in the cumulative reduction framework CURE by Panzer, Wolf and Lohmann \cite{morPanJWetal13,morPan14,morWol15} to adaptively chose the reduced order. Finally, the Model Function framework is currently being used in parametric model reduction to recycle the Model Function from one parameter sample point to another and reduced the cost tied to the repeated reduction at several points in the parameter space.
	These advances will be topics of future publications.

\printbibliography
\end{document}